\theoremstyle{plain}
\newtheorem{theorem}{Theorem}[section]
\newtheorem{proposition}[theorem]{Proposition}
\newtheorem{lemma}[theorem]{Lemma}
\theoremstyle{definition}
\theoremstyle{remark}
\numberwithin{equation}{section}
\numberwithin{theorem}{section}
\renewcommand{\epsilon}{\varepsilon}
\renewcommand{\tilde}{\widetilde}
\renewcommand{\div}{\mathop{\rm div}\nolimits}
\definecolor{light}{gray}{.9}
\title[Stochastic monotonicity from an Eulerian viewpoint]{Stochastic monotonicity from an Eulerian viewpoint }
\author[D.\ Gabrielli]{Davide Gabrielli}
\address{Davide Gabrielli \hfill\break \indent
  DISIM, University of L'Aquila
  \hfill\break\indent
  Via Vetoio,   67100 Coppito, L'Aquila, Italy}
\email{gabriell@univaq.it}
\author[I.G.\ Minelli]{Ida Germana Minelli}
\address{Ida Germana Minelli \hfill\break \indent
  DISIM, University of L'Aquila
  \hfill\break\indent
  Via Vetoio,   67100 Coppito, L'Aquila, Italy}
\email{ida.minelli@univaq.it}
\begin{document}

\begin{abstract}
Stochastic monotonicity is a well known partial order
relation between probability measures defined on the same partially ordered set.
Strassen Theorem establishes equivalence between stochastic monotonicity
and the existence of a coupling compatible with respect to the partial order. We consider the case of a countable set and introduce the class of \emph{finitely decomposable flows} on a directed acyclic graph associated to the partial order. We show that a probability measure stochastically dominates another probability measure if and only if
there exists a finitely decomposable flow  having divergence given by the difference of the two measures. We illustrate the result with some examples. In fluid theory the Lagrangian description follows the trajectories of the particles while the Eulerian one
observes the local flow. A coupling gives a Lagrangian description of the transference plan of mass while a flow gives an Eulerian one.

\bigskip

\noindent {\em Keywords}: Stochastic monotonicity, couplings, flows on networks.

\noindent{\em AMS 2010 Subject Classification}:
60E15, 05C21  
\end{abstract}

\maketitle
\thispagestyle{empty}

\section{Introduction}
Given a partially ordered set (from now on, \emph{poset}) $V$ there is a naturally induced partial order relation on the set of probability measures on $V$, usually called \emph{stochastic monotonicity}.
Given two probability measures $\mu_1, \mu_2$, we say that $\mu_2$ stochastically dominates $\mu_1$, and write $\mu_1\preceq \mu_2$, if the expectation of any bounded increasing function with respect to the measure $\mu_1$ is less or equal than the expectation with respect to $\mu_2$.

Strassen Theorem \cite{L,LL,St} is an important and powerful result in Probability Theory. It states that $\mu_2$ stochastically dominates $\mu_1$ if and only if there
exists a coupling between the two measures that gives zero weight to pairs of elements
not increasingly ordered.

We consider the case of a countable poset $V$ and show a new equivalent statement for stochastic domination, based on a graph structure associated to $V$.
Indeed, the partial order structure of a countable poset can be described in terms
of an acyclic directed graph $(V,E)$. On such a graph, it is possible to define a \emph{flow}, which  is an assignment of a positive weight, representing the amount of mass flown,
to each directed edge. The \emph{divergence} of the flow on a vertex is defined as the difference between the amount of mass flown outside and the amount of mass flown into the vertex. We will define the class of \emph{finitely decomposable} flows on $(V,E)$, i.e., flows that can be decomposed as a summable superposition of ''elementary flows'' associated to finite self avoiding paths on the graph (see section \ref{NeR} for the definition), and
we will prove that $\mu_2$ stochastically dominates $\mu_1$ if and only if there exists a finitely decomposable flow  having as divergence the difference between the two measures. This statement may be reformulated intuitively by saying that one measure stochastically dominates another one when it is possible to transform this second measure into the first one by moving mass according to the partial order structure. We emphasize that with this new formulation
stochastic domination is shown to be equivalent to the existence of a flow on a directed graph encoding all the information about the partial order.
This allows to connect directly monotonicity results to the geometry of the underlying partial order. In particular, in section \ref{appl} we will discuss a dual problem for which the homological structure of the directed graph turns out to be a relevant characteristic.

We have therefore the 3 equivalent statements: (1) $\mu_1 \preceq \mu_2$; (2) there exists a finite decomposable flow
having divergence $\mu_1-\mu_2$: (3) there exists a compatible coupling. The content of Strassen Theorem is that
(1) $\Leftrightarrow$ (3). We show how to prove all the remaining implications. We show that (1) $\Leftrightarrow$ (2) can be obtained using Farkas Lemma \cite{S} in the finite case and a suitable infinite dimensional version of such Lemma in the infinite case. The equivalence (2)$\Leftrightarrow$ (3) can be obtained
using ideas from the theory of mass transportation \cite{RR,Sant} where an equivalence with a continuous problem of flows also appears \cite{B,Sant}. In particular we give a constructive proof obtained by an algorithmic construction that associates a coupling to any finite acyclic flow \cite{PS} and is a discrete version of a construction due to S.K. Smirnov on bounded domains of $\mathbb R^n$ \cite{SS}. The countable infinite case is obtained extending the algorithmic construction , with a limiting argument, to the class of finitely decomposable flows.

There is a kind of hierarchical structure between the statements (1), (2) and (3) and the proof of any implication (i) $\Rightarrow$ (j) require a difficult argument when $i<j$ while a simple construction is enough when $i>j$. The proof of the equivalence
between statements (1), (2) and (3) can be obtained by one of the two cycles of implications : (1) $\Rightarrow$ (3) $\Rightarrow$ (2) $\Rightarrow$ (1) or (1) $\Rightarrow$ (2) $\Rightarrow$ (3) $\Rightarrow$ (1).

We present the proofs of all implications (apart the ones of the classic Strassen Theorem) since they are interesting in themselves and reflect the geometric structure behind.

Our result is similar in spirit to the equivalence between the  Monge-Kantorovich problem with cost $|x-y|$ and a
minimal flow problem proposed by Beckmann \cite{B}. This equivalence in illustrated in Chapter 4 of \cite{Sant}.
The reason of this equivalence is in the fact that the cost in the Monge-Kantorovich problem does not depend on the details of the transference plan but only on the flow of mass locally observed. The same happens for stochastic order. In particular our result is the counterpart for stochastic monotonicity of Theorem 4.6 in \cite{Sant} for mass transportation.

We first prove our result in the finite case, when the partial order structure can be encoded by a minimal directed acyclic graph called \emph{Hasse diagram}.
The result in the finite case is implicit in \cite{HW} where it is proved using the theory of convex games. There are also similar ideas and statements in \cite{M} (and references therein) obtained by duality. A clear formulation in terms of a flow on network problem is however missing and moreover we extend the result to the countable infinite case. Item $(v)$ of Theorem 1 of \cite{KKO} can be seen as a very special case of our formulation.

At the beginning of section \ref{NeR} we give a more detailed overview of the basic
ideas and constructions behind our results. In section \ref{appl} we discuss some examples.
Even if the examples are simple they are important to point out the change of perspective
with respect to the usual approach. The proofs of monotonicity are indeed obtained with computations that are different from the usual ones.

\smallskip
The structure of the paper is the following.\\
In section \ref{NeR} we fix notation and state the main result of the paper.
In section \ref{Strassentot} we prove Theorems \ref{ilteorema} and \ref{ilteorema-infinito}.
In section \ref{PPP} we prove some auxiliary Propositions \ref{proposizione} and \ref{proposizione3}.
In section \ref{appl} we discuss some examples.

\section{Preliminaries, notation and main results}
\label{NeR}

In this section we discuss the general framework, introduce notation and
state our main results.
We start with a short introductive illustration of our results. We discuss informally the intuitive idea and the novelty with respect to the classic statement.

\subsection{Preliminaries}
We start recalling again the Strassen Theorem (see for example Section 1 of Chapter IV of \cite{LL}). It says that the statements
$(1)\ \mu_1\preceq \mu_2 $, and $(3)$ there exists a compatible coupling $\rho$ between $\mu_1$ and $\mu_2$, are indeed equivalent.

A coupling $\rho$ determines a transportation of mass according to which the initial distribution of mass $\mu_1$ is transformed into the final distribution of mass $\mu_2$. The value $\rho(x,y)$ fix the amount of mass that has to be moved from site $x$ to site $y$. Another possible description of the transportation is obtained looking at the channels through which the mass can flow and recording the amount of mass flown, without taking care of origin and destination. This is obtained giving a flow $Q$ for which $Q(x,x')$ is the amount of mass flown across the channel $(x,x')$.
This second description is less detailed since you have not a complete view of the transference plan and several couplings can correspond to the same flow. These two perspectives are similar to the Lagrangian and the Eulerian point of view in fluid theory.

In the case of a countable partial order the channels are naturally individuated by the edges of an acyclic directed graph $(V,E)$ determining the partial order. For a flow on a graph there is a natural definition of discrete divergence. If initially we have a distribution of mass $\mu_1$ and we let flow mass according to a flow such that $\div Q=\mu_1-\mu_2$ at the end we obtain the distribution of mass
$\mu_2$. Note that the existence of a flow satisfying $\div Q=\mu_1-\mu_2$ is not obvious since the edges of the graph are oriented and the mass can flow only in one direction. We can formulate now a natural statement: $(2)$ there exists a flow on a directed acyclic graph determining the partial order such that $\div Q=\mu_1-\mu_2$. The main result of this paper is that the statements $(1)$, $(2)$ and $(3)$ are all equivalent.

To prove that $\mu_1\preceq \mu_2$ we can then use the statement $(3)$ constructing a compatible coupling. This is a collection of
$|V|\times |V|$ numbers with $2|V|+1$ constraints. We can however also use statement $(2)$ exhibiting a flow on $(V,E)$ such that $\div Q=\mu_1-\mu_2$. This is a simpler object determined by a collection of $|E|$ numbers with $|V|$ constraints.

\subsection{Digraphs and posets}

We consider a countable set $V$. A directed graph, called shortly a \emph{digraph}, with vertices set $V$ is a pair $(V,E)$
where $E\subset V\times V$ is a collection of directed edges.
We assume that there are not edges of the type $(x,x)$.
A directed path $\gamma$ from $x\in V$ to $y\in V$ is
a sequence of vertices $\gamma:=(x_0,\dots,x_n)$ such that $x_0=x$, $x_n=y$ and $(x_i,x_{i+1})\in E$, for $i=0,\dots,n-1$.
The integer $n$ is the length of the directed path and is denoted also by $|\gamma|$.
If there exists an $i$ such that $(u,v)=(x_i,x_{i+1})$ we write $(u,v)\in \gamma$. Given a subset $S\subseteq V$, if $x_i\in S$ for any $i$ we write $\gamma \subseteq S$. We call $\gamma^-:=x_0$ the starting point
of the path and $\gamma^+:=x_n$ its final point. A directed cycle is a directed path for which
$x_0=x_n$. Given two paths $\gamma=(x_0,\dots , x_n)$ and $\gamma'=(x'_0,\dots , x'_k)$
such that $x_n=x'_0$ we denote by
\begin{equation}
\gamma \star \gamma':=(x_0,\dots , x_n , x'_1,\dots , x'_k)\,,
\label{conc}
\end{equation}
the path given by their concatenation. A path is called self-avoiding if $x_i\neq x_j$ when $i\neq j$.
A digraph containing no directed cycles is called a directed acyclic graph.
Given a digraph $(V,E)$ we call $(V,\mathcal E)$ the un-directed graph with
edges
$$
\mathcal E:=\left\{\{x,y\}\,:\, (x,y)\in E\ \textrm{or}\ (y,x)\in E\right\}\,.
$$
\smallskip
Given a digraph $(V,E)$ we can construct a new digraph $(V,\overline E)$ called its \emph{transitive closure}. A pair
$(x,y)\in \overline E$ if and only if there exists a directed path form $x$ to $y$.
When $|V|<+\infty$ and $(V,E)$ is
an acyclic digraph  we can define also a new directed acyclic graph $(V,\underline E)$ that is called its
\emph{transitive reduction}: it is the minimal acyclic digraph having the same transitive
closure as $(V,E)$, i.e., such that for any digraph $(V,F)$ with
$(V,\overline E)=(V,\overline F)$ we have $\underline E\subseteq F$. When the original digraph $(V,E)$ is acyclic
and $|V|<+\infty$, it can be shown that $(V,\underline E)$ is uniquely determined (see \cite{D} section 4.3).\\

A partial order relation $\leq$ on $V$ is a subset $S\subseteq V\times V$
satisfying the properties of \emph{reflexivity, antisymmetry and transitivity}. When $(x,y)\in S$ we write $x\leq y$ and
the pair $(V,\leq)$ is called a partially ordered set or simply a \emph{poset}.
Then, as can be easily checked, if we set $\overline E=S\setminus\{(x,x); x\in V\}$, the pair $(V,\overline E)$
gives an acyclic digraph whose transitive closure coincides with itself. On the other hand, any acyclic digraph $(V,E)$ induces
a partial order on $V$ through the relation
$x\leq y\ \Leftrightarrow\ x=y\ \mbox{or}\ (x,y)\in \overline{E}$. So, a poset can be described with an acyclic digraph.\\

Note that such a description is not unique, since different digraphs can have the same transitive closure. However, when $V$ is finite, it is uniquely identified the transitive reduction $(V, \underline{E})$ and it is called the \emph{Hasse diagram} of the poset.\\
When $|V|=+\infty$, it is not always possible to define the transitive reduction (think, as an example, to the set of rationals); nevertheless, any acyclic digraph has a well defined
transitive closure and consequently it determines a partial order on $V$. Any countable infinite poset can be described in terms of an acyclic digraph.\\

\subsection{Couplings and flows}
\label{cfdvf}
Let  $\mu_1$ and $\mu_2$ be two probability measures on a poset $(V, \leq )$.\\
A \emph{coupling} between $\mu_1$ and $\mu_2$ is a probability measure $\rho$ on $V\times V$ such that
$$
\left\{
\begin{array}{ll}
\sum_{y\in V}\rho(x,y)=\mu_1(x) \,, & \forall x\in V\,,\\
\sum_{x\in V}\rho(x,y)=\mu_2(y) \,, & \forall y\in V\,.
\end{array}
\right.
$$
We say that a coupling $\rho$ is \emph{compatible} with the partial order $\leq$
if
$$
\rho\{(x,y)\,:\, x\leq y\}=1\,.
$$
We say that $\mu_2$ \emph{stochastically dominates} $\mu_1$ with respect to the partial order $\leq$
and write $\mu_1\preceq \mu_2$ if, for any bounded increasing function $f: V\longrightarrow \mathbb{R}$ (i.e., a function such that $f(x)\leq f(y)$ whenever $x\leq y$ in $V$) we have
\begin{equation*}
\mu_1(f)\leq \mu_2(f)
\end{equation*}
 where $\mu(f)=\mathbb{E}_\mu (f)$ denotes expectation with respect to $\mu$.\\

Let $(V,E)$ be a digraph.\\
A \emph{flow} on $(V,E)$ is a map
$Q: E\to \mathbb R^+$. The \emph{divergence} of $Q$ at $x\in V$
is defined by
\begin{equation}
\div Q(x):=\sum_{y:(x,y)\in E}Q(x,y)-\sum_{y:(y,x)\in E}Q(y,x)\,.
\label{divQ}
\end{equation}
When $|V|=+\infty$ the divergence is not always well defined.
In this case we say that the divergence of a flow $Q$ exists and is given by \eqref{divQ} if both series
appearing in the r.h.s. of \eqref{divQ} are convergent for any $x\in V$. \\
We denote by $E(Q)$ the elements $(x,y)\in E$ such that $Q(x,y)>0$. We say that a flow is \emph{acyclic} if the digraph $(V, E(Q))$ is acyclic.
Given a directed path $\gamma=(x_0,\dots,x_n)$ on $(V,E)$, we associate to it the flow $Q_\gamma$ defined by
\begin{equation}\label{flowcycl}
Q_\gamma(x,y):=\left\{\begin{array}{ll}
1 & \textrm{if}\ (x,y)\in \gamma\,, \\
0 & \textrm{otherwise}\,.
\end{array}
\right.
\end{equation}
On the set of flows on a fixed digraph there is a natural partial order structure defined
by $Q\leq Q'$ if $Q(x,y)\leq Q'(x,y)$ for any $(x,y)\in E$.

\subsection{Finitely decomposable flows}
We say that a flow $Q$ on the digraph $(V,E)$ is \emph{finitely decomposable} if there exists a countable family
of finite self avoiding directed paths $\left\{\gamma_n\right\}_{n\in \mathbb N}$ and a  sequence
$\left\{q_n\right\}_{n\in \mathbb N}$ of weights  $q_n\geq 0$ with $\sum_n q_n<+\infty$,
such that
\begin{equation}\label{finite-decomp}
Q=\sum_nq_n Q_{\gamma_n}\,.
\end{equation}
Note that in an acyclic digraph any path is self avoiding.
If $|V|<+\infty$ then any flow is finitely decomposable since we have for example
$$
Q=\sum_{(x,y)\in E}Q(x,y)Q_{(x,y)}\,.
$$
A finitely decomposable flow is not necessarily summable since
we have
\begin{equation}
\sum_{(x,y)\in E}Q(x,y)=\sum_n q_n|\gamma_n|
\label{Fub}
\end{equation}
and the r.h.s. of \eqref{Fub} can be infinite.
Note that a finite decomposition \eqref{finite-decomp} of a finitely decomposable
flow induces naturally a finite positive measure on $\Gamma$ the countable set of all
finite self-avoiding paths on $(V,E)$. This is simply
\begin{equation}\label{fini-ind-mea}
\sum_n q_n \delta_{\gamma_n}\,,
\end{equation}
where $\delta$ is the delta Dirac measure.\\
Since the paths in \eqref{finite-decomp} are self avoiding, every single path $\gamma_n$ may
contribute just once to the outgoing or ingoing flux at a single site. This implies that
the divergence of a finitely
decomposable flow is well defined and it is given by
\begin{equation}
\div Q(x)=\sum_{\left\{n\,:\, \gamma_n^-=x\right\}}q_n-\sum_{\left\{n\,:\, \gamma_n^+=x\right\}}q_n\,.
\label{divQ-fd}
\end{equation}

\subsection{A third equivalent statement in Strassen Theorem}

We start with the finite case.
\begin{theorem}
Let $(V,\leq)$ be a finite poset and let $(V,\underline E)$ be the acyclic digraph
associated to its Hasse diagram. The following statements are equivalent.
\begin{enumerate}

\item $\mu_1\preceq \mu_2$\,,

\item there exists a flow $Q$ on $(V,\underline E)$ such that $\div Q= \mu_1-\mu_2$\,,

\item there exists a compatible coupling between $\mu_1$ and $\mu_2$\,.

\end{enumerate}
\label{ilteorema}
\end{theorem}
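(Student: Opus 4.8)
The strategy is to prove the cycle of implications $(2)\Rightarrow(1)$, $(3)\Rightarrow(2)$, and $(1)\Rightarrow(3)$, since the first is classical and the other two exploit the coupling/flow correspondence described in the introduction.

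I would start with $(2)\Rightarrow(1)$, which is the easy direction and is essentially one half of the classical Strassen theorem. Given a compatible coupling $\rho$ between $\mu_1$ and $\mu_2$, and any bounded increasing function $f$, I would write
\begin{equation*}
\mathbb E_{\mu_2}(f)-\mathbb E_{\mu_1}(f)=\sum_{x\leq y}\rho(x,y)\bigl(f(y)-f(x)\bigr)\geq 0\,,
\end{equation*}
using that $\rho$ is supported on pairs with $x\leq y$ and that $f(y)\geq f(x)$ whenever $x\leq y$. This gives $\mu_1\preceq\mu_2$ directly.

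Next I would prove $(3)\Rightarrow(2)$. This is where the main construction of the paper enters: given a flow $Q$ on the acyclic digraph $(V,\underline E)$ with $\div Q=\mu_1-\mu_2$, one of the algorithmic constructions announced in the introduction (Section~\ref{ec-co-ac-fl}) associates to $Q$ a coupling $\rho$ between two probability measures whose difference equals $\div Q$, i.e. between $\mu_1$ and $\mu_2$. Because every directed edge in $\underline E$ goes from a smaller to a larger element of the poset (the transitive reduction is built from the Hasse diagram with $X_2<Y_2$), the self-avoiding paths produced by the algorithm are increasing paths, so the resulting coupling is automatically supported on pairs $(x,y)$ with $x\leq y$; hence $\rho$ is compatible. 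In the finite case no decomposability hypothesis is needed since all sums are finite, so the construction applies directly.

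Finally, for $(1)\Rightarrow(3)$, I would argue that stochastic domination $\mu_1\preceq\mu_2$ guarantees the existence of a flow with the required divergence. The cleanest route is to use $(1)\Rightarrow(2)$ (the other, harder half of classical Strassen, which produces a compatible coupling $\rho$ from the domination), and then convert that coupling into a flow: for each pair $(x,y)$ with $x\leq y$ and $\rho(x,y)>0$, choose a directed path from $x$ to $y$ in $(V,\underline E)$ (which exists because $x\leq y$ means $(x,y)\in\overline{\underline E}$) and route the mass $\rho(x,y)$ along it, summing the contributions $\rho(x,y)Q_\gamma$ over all such pairs. The divergence of this flow telescopes along each path to $\mathbbm 1_x-\mathbbm 1_y$ weighted by $\rho(x,y)$, and summing over all pairs yields $\div Q=\mu_1-\mu_2$. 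I expect the main obstacle to be invoking the classical direction $(1)\Rightarrow(2)$ cleanly — if the authors wish to keep the argument self-contained rather than quoting Strassen, one would instead prove $(1)\Rightarrow(3)$ directly by a Hahn--Banach / max-flow--min-cut type duality argument, showing that the absence of a suitable flow would produce an increasing function violating $\mu_1\preceq\mu_2$; in the finite setting this is a finite-dimensional linear-programming duality and the domination hypothesis supplies exactly the feasibility condition needed.
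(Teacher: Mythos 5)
Your proposal is correct and follows essentially the same route as the paper: the implication $(3)\Rightarrow(2)$ via the algorithmic flow-to-coupling construction of Section~\ref{ec-co-ac-fl} (with compatibility coming from the fact that mass moves only along edges of $(V,\underline E)$), the coupling-to-flow conversion with telescoping divergence, and a finite-dimensional duality argument for $(1)\Rightarrow(3)$. The only cosmetic differences are that the paper arranges the easy monotonicity direction as $(3)\Rightarrow(1)$ by summation by parts over edges rather than your $(2)\Rightarrow(1)$ over pairs, and for $(1)\Rightarrow(3)$ it uses exactly the self-contained Farkas Lemma argument you describe as your fallback, rather than quoting the hard half of classical Strassen.
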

We recall that Strassen Theorem states the equivalence between stochastic domination $\mu_1\preceq \mu_2$ and the existence of a compatible coupling between $\mu_1$ and $\mu_2$.\  Statement $(2)$ shows that finding such a coupling is equivalent to solve a flow on network problem. In statement $(2)$ the digraph $(V,\underline E)$ could be
replaced by any acyclic digraph having the same transitive closure.

\smallskip
A result analogous to Theorem \ref{ilteorema} holds also when $|V|=+\infty$ but we need the additional assumption of finite decomposability of $Q$.
\\

\begin{theorem}
Let $(V,\leq)$ be a countable infinite partial order and let $(V,E)$ be a directed acyclic graph
such that its transitive closure $(V,\overline E)$ induces the partial order $\leq$.
The following statements are equivalent.
\begin{enumerate}

\item $\mu_1\preceq \mu_2$\,,

\item there exists a finitely decomposable flow $Q$ on $(V,E)$ such that\\
$\div Q= \mu_1-\mu_2$\,.

\item there exists a compatible coupling between $\mu_1$ and $\mu_2$\,.

\end{enumerate}
\label{ilteorema-infinito}
\end{theorem}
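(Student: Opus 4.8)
The plan is to prove the theorem through the two equivalences $(1)\Leftrightarrow(2)$ and $(2)\Leftrightarrow(3)$. The first of these is the classical Strassen Theorem, which remains valid for a countable poset: since $V$ carries the discrete topology it is Polish, the set $\left\{(x,y):x\leq y\right\}$ induced by $(V,\overline E)$ is automatically closed, and the version in \cite{L,St} applies. Hence the genuinely new content is the equivalence $(2)\Leftrightarrow(3)$, the countable analogue of the coupling--flow correspondence behind the finite Theorem \ref{ilteorema}, now with the additional care required to keep the flow finitely decomposable.

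For $(2)\Rightarrow(3)$ I would argue constructively. Let $\rho$ be a compatible coupling. For each off-diagonal pair with $\rho(x,y)>0$ compatibility gives $x\leq y$, hence $(x,y)\in\overline E$ and there is a finite directed path $\gamma_{x,y}$ from $x$ to $y$ in $(V,E)$; as $(V,E)$ is acyclic such a path is automatically self-avoiding. Fixing one path for each pair and setting
$$
Q:=\sum_{\left\{(x,y)\,:\,x\neq y,\ \rho(x,y)>0\right\}}\rho(x,y)\,Q_{\gamma_{x,y}}
$$
produces a flow of the form \eqref{finite-decomp} whose weights are nonnegative and sum to at most $1$, so $Q$ is finitely decomposable. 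Since every $\gamma_{x,y}$ is self-avoiding the divergence is computed by \eqref{divQ-fd}: the path $\gamma_{x,y}$ adds $\rho(x,y)$ to the outgoing flux at $x$ and to the incoming flux at $y$, whence
$$
\div Q(z)=\sum_{y\neq z}\rho(z,y)-\sum_{x\neq z}\rho(x,z)=\mu_1(z)-\mu_2(z)\,,
$$
the diagonal mass $\rho(z,z)$ cancelling between the two marginals.

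For $(3)\Rightarrow(2)$ I would invoke the algorithmic construction of Section \ref{ec-co-ac-fl}. Because $(V,E)$ is acyclic the flow $Q$ is acyclic, and its finite decomposability is exactly the hypothesis under which that construction extends to the countable case by a limiting procedure; fed with $Q$ together with $\mu_1,\mu_2$ it returns a coupling $\rho$ between them. This $\rho$ is compatible: its off-diagonal mass is transported only along directed paths of $(V,E)$, so $\rho(x,y)>0$ with $x\neq y$ forces $(x,y)\in\overline E$, i.e. $x\leq y$, while the remaining mass sits on the diagonal; therefore $\rho\left(\left\{(x,y):x\leq y\right\}\right)=1$.

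The step I expect to be the main obstacle is this last direction in the infinite setting. In the finite case $(3)\Rightarrow(2)$ reduces to a finite linear program, but here one must pass to a limit, and the compactness needed is not available for an arbitrary flow with the prescribed divergence. Finite decomposability supplies it: it yields the finite positive measure \eqref{fini-ind-mea} on the countable set $\Gamma$ of self-avoiding paths, and tightness of this measure is what forces the approximating finite couplings to converge to a genuine coupling rather than to a sub-probability measure that leaks mass to infinity. Verifying that the limit preserves both marginals exactly, so that it is again a coupling between $\mu_1$ and $\mu_2$, is the delicate point, and it is precisely what the construction of Section \ref{ec-co-ac-fl} is designed to guarantee.
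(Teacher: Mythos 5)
Your proposal is correct, and for the equivalence $(2)\Leftrightarrow(3)$ it follows the paper's route essentially verbatim: $(2)\Rightarrow(3)$ is the same explicit construction of a finitely decomposable flow from the compatible coupling via a choice of self-avoiding connecting paths (the paper's \eqref{decoflin}), with the divergence computed through \eqref{divQ-fd} exactly as you do, and $(3)\Rightarrow(2)$ is delegated, as in the paper, to the limiting algorithmic construction of Theorem \ref{donald}, whose conclusion that $\rho(x,y)=0$ unless $y$ is reachable from $x$ in $(V,E(Q))$ is precisely your compatibility argument. Where you genuinely diverge is in how statement $(1)$ is handled. You close the loop by invoking the classical Strassen theorem for a Polish space with a closed order, which is legitimate here since a countable discrete space is Polish and every relation on it is closed; this is the shortest path but treats the link between stochastic domination and flows as a black box. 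The paper instead proves $(1)\Leftrightarrow(3)$ directly: the implication $(3)\Rightarrow(1)$ is a summation by parts along the finite decomposition, $\mu_2(f)-\mu_1(f)=\sum_n q_n\left(f\left(\gamma_n^+\right)-f\left(\gamma_n^-\right)\right)\geq 0$ (the edgewise computation \eqref{byparts} may fail for a finitely decomposable but non-summable flow, which is why the path decomposition is essential), while $(1)\Rightarrow(3)$ is an infinite-dimensional Farkas-type argument resting on a dedicated lemma that the set $D$ of divergences of finitely decomposable flows is convex and closed in the strong topology of $L^1(V)$, followed by Hahn--Banach separation to produce an increasing function contradicting $\mu_1\preceq\mu_2$. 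The paper's route buys a self-contained proof independent of Strassen's theorem together with the closedness lemma for $D$, which has independent interest; yours buys brevity at the cost of importing the classical result.
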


In general it is not easy to verify if a flow $Q$ is finitely decomposable, so we state a
sufficient and a necessary condition.\\
Let $(V,E)$ be an infinite digraph. An invading sequence of vertices $\{V_n\}_{n\in \mathbb N}$ is
a sequence of subsets $V_n\subseteq V$ such that $|V_n|<+\infty$, $V_n\subseteq V_{n+1}$ and
$\cup_n V_n=V$.
Given a flow $Q$ we say that it has
\emph{zero flux towards infinity} (see \cite{BFG} for the original definition and related results)
if there exists an invading sequence of vertices such that
$$
\lim_{n\to +\infty}\left(\sum_{x\in V_n, y\not\in V_n}Q(x,y)\right)=0\,.
$$
We have the following sufficient condition.
\begin{proposition}\label{proposizione}
Let $Q$ be a flow on an infinite acyclic digraph $(V,E)$ such that $\div Q=\mu_1-\mu_2$.
If $Q$ has zero flux towards infinity, then it is finitely decomposable.
\end{proposition}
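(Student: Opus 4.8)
The plan is to approximate $Q$ by finite acyclic flows on an invading sequence $\{V_n\}$ witnessing the zero flux towards infinity, to decompose each of these by the finite-case construction, and then to pass to a limit at the level of the induced path measures on $\Gamma$. For each $n$ let $Q_n$ be the restriction of $Q$ to the edges with both endpoints in $V_n$. Since $Q$ is acyclic so is $Q_n$, and by the finite-case decomposition (Section~\ref{ec-co-ac-fl}) we may write $Q_n=\sum_i q_i^{(n)}Q_{\gamma_i^{(n)}}$ with finitely many self-avoiding paths $\gamma_i^{(n)}\subseteq V_n$ running from vertices of positive $\div Q_n$ to vertices of negative $\div Q_n$. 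A direct computation gives, for $x\in V_n$,
\[
\div Q_n(x)=\div Q(x)-\mathrm{out}_n(x)+\mathrm{in}_n(x),
\]
where $\mathrm{out}_n(x)$ and $\mathrm{in}_n(x)$ are the portions of the flux at $x$ leaving, resp.\ entering, $V_n$. Summing over $x\in V_n$ and using $\sum_{x\in V_n}\div Q_n(x)=0$ yields $(\mu_1-\mu_2)(V_n)=F_n^{\mathrm{out}}-F_n^{\mathrm{in}}$, with $F_n^{\mathrm{out}},F_n^{\mathrm{in}}$ the total fluxes out of and into $V_n$. Since $(\mu_1-\mu_2)(V_n)\to0$ and $F_n^{\mathrm{out}}\to0$ by hypothesis, also $F_n^{\mathrm{in}}\to0$: both boundary fluxes vanish.

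Encode each decomposition as the finite positive measure $m_n=\sum_i q_i^{(n)}\delta_{\gamma_i^{(n)}}$ on the countable set $\Gamma$, with total mass $m_n(\Gamma)=\sum_{x\in V_n}[\div Q_n(x)]_+$. Using the displayed identity and $[a-b+c]_+\le[a]_++c$ for $b,c\ge0$, one bounds $m_n(\Gamma)\le\sum_{x\in V_n}[\div Q(x)]_++F_n^{\mathrm{in}}$, which stays bounded (a matching lower bound shows in fact $m_n(\Gamma)\to\sum_x[(\mu_1-\mu_2)(x)]_+=:D$). As $\Gamma$ is countable and the masses are uniformly bounded, a diagonal extraction provides a subsequence along which $m_n(\gamma)\to q(\gamma)$ for every $\gamma\in\Gamma$; Fatou gives $\sum_\gamma q(\gamma)\le D<+\infty$, so the weights are summable.

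The crux, and the only place where the zero flux hypothesis is essential beyond the mass bound, is to rule out escape of mass in $\Gamma$, i.e.\ onto paths that drift away or grow unboundedly long; this is the step I expect to be the main obstacle. I would prove tightness of $\{m_n\}$ by fixing a reference $V_m$ and estimating the $m_n$-mass of the paths that meet $V\setminus V_m$. Any such path either (Case A) crosses the boundary of $V_m$, hence uses some internal edge $(x,y)$ with $x\in V_m$, $y\in V_n\setminus V_m$; since for every internal edge $e$ one has $\sum_{\gamma\ni e}m_n(\gamma)=Q_n(e)=Q(e)$, the total such mass is at most $F_m^{\mathrm{out}}$; or (Case B) it starts in $V_n\setminus V_m$, whose total source strength is $\sum_{x\in V_n\setminus V_m}[\div Q_n(x)]_+\le\sum_{x\notin V_m}[(\mu_1-\mu_2)(x)]_++F_n^{\mathrm{in}}$. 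Writing $\Gamma_m\subset\Gamma$ for the finite set of self-avoiding paths contained in $V_m$, this gives
\[
m_n(\Gamma\setminus\Gamma_m)\le F_m^{\mathrm{out}}+\sum_{x\notin V_m}[(\mu_1-\mu_2)(x)]_++F_n^{\mathrm{in}}.
\]
Choosing $m$ large so that the first two terms are small, and then $n$ large so that $F_n^{\mathrm{in}}$ is small, establishes tightness uniformly in $n$.

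Finally, tightness together with the pointwise convergence $m_n(\gamma)\to q(\gamma)$ on the countable set $\Gamma$ yields convergence in total variation, $\sum_\gamma|m_n(\gamma)-q(\gamma)|\to0$ (split the sum over the finite set $\Gamma_m$ and its complement, bounding the latter by tightness and by the Fatou tail of $q$). Consequently, for every edge $e$—which is internal for all large $n$, so that $\sum_{\gamma\ni e}m_n(\gamma)=Q_n(e)=Q(e)$—we may pass to the limit to obtain $Q(e)=\sum_{\gamma\ni e}q(\gamma)$, that is $Q=\sum_\gamma q(\gamma)Q_\gamma$. Since $\sum_\gamma q(\gamma)<+\infty$, this is precisely a decomposition of the form \eqref{finite-decomp}, proving that $Q$ is finitely decomposable.
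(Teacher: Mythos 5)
Your proof is correct and follows essentially the same strategy as the paper's: truncate $Q$ to an invading sequence, decompose each finite acyclic piece into source-to-sink self-avoiding paths, encode the decompositions as finite positive measures on $\Gamma$, and use the zero-flux hypothesis to obtain a uniform mass bound and tightness before passing to the limit. The only differences are cosmetic: the paper closes each $V_n$ with ghost vertices $g_\pm$ so that the divergence on $V_n$ is preserved exactly and then invokes Prokhorov's theorem, whereas you restrict the flow outright, track the resulting divergence error through the boundary fluxes $F_n^{\mathrm{in}}, F_n^{\mathrm{out}}$, and conclude by diagonal extraction plus a total-variation estimate.
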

Next proposition gives a necessary condition.
\begin{proposition}\label{proposizione3}
Let $Q$ be a flow on an infinite digraph $(V,E)$. If $Q$ is finitely decomposable then for all invading sequences
\begin{equation}
\lim_{n\to +\infty} \sup\Big\{Q(x,y)\,:\, \left\{x,y\right\}\cap \left\{V\setminus V_n\right\}\neq \emptyset\Big\}= 0\,.
\label{london}
\end{equation}
\end{proposition}

\section{Proofs of Theorems \ref{ilteorema} and \ref{ilteorema-infinito}}
\label{Strassentot}

We discuss the proofs of all the implications apart (1) $\Leftrightarrow$ (3)
that is the the content of the classic Strassen Theorem. The proofs of the Theorems can be obtained using one of the two cycles of implications : (1) $\Rightarrow$ (3) $\Rightarrow$ (2) $\Rightarrow$ (1) or (1) $\Rightarrow$ (2) $\Rightarrow$ (3) $\Rightarrow$ (1).
We give the proofs of all the implications since  they are  interesting  in themselves and give insight to the
geometric structures involved. We remark that the proof (2) $\Rightarrow$ (3) is constructive and using ideas from mass transportation theory shows how to construct a coupling starting from an acyclic finite decomposable flow.

\subsection{Proof of Theorem \ref{ilteorema}}\label{Strassen}

\subsubsection{(2) $\Rightarrow$ (1)}
Let $Q$ be a finite  flow such that $\div Q=\mu_1-\mu_2$. Then
by a discrete integration by parts we have
\begin{equation*}
\mu_2(f)-\mu_1(f)=-\sum_{x\in V}f(x)\div Q(x)=\sum_{(x,y)\in E}Q(x,y)\left(f(y)-f(x)\right)\geq 0\,.
\end{equation*}

\subsubsection{(1) $\Rightarrow$ (2)}

Farkas Lemma (see \cite{S} volume A section 5.4) states that, given an
$n\times m$ matrix $A$ and $b\in \mathbb R^n$, there exists $x\in (\mathbb R^+)^m$ such that $Ax=b$ if and only if
for any $y\in \mathbb R^n$ such that $A^Ty\in (\mathbb R^+)^m$ the inequality $y\cdot b \geq 0$ holds (where $\cdot$ denotes the Euclidean scalar product).
\smallskip
Let us consider the adjacency matrix $A$ of $(V,\underline E)$. It is a $|V|\times |\underline E|$ matrix whose rows and columns are labeled respectively with the vertices of $V$ and the edges of $\underline E$ and it is defined by fixing equal to $+1$ the element corresponding
to the row $x$ and the column $(x,y)$, and by fixing equal to $-1$ the element corresponding to the row $y$ and the column $(x,y)$.
All the remaining elements in the column $(x,y)$ are set equal to $0$.
With this definition, given a flow $Q$ we have
$$
\div Q(x)=AQ(x)\,.
$$
Moreover given a function $f:V\to \mathbb R$ we have that
$$
-A^Tf(x,y)=f(y)-f(x)\,.
$$
The function $f$ is increasing if and only if $A^Tf(x,y)\leq 0$ for any $(x,y)\in \underline E$. The result now follows
applying Farkas Lemma with the matrix $A$ coinciding with the adjacency matrix and taking the vector
$b=\mu_1-\mu_2$.\\

\subsubsection{(3)$\Rightarrow$ (2)}
Suppose that there exists a compatible coupling $\rho$ between $\mu_1$ and $\mu_2$.
If $x\leq y$ and $x\neq y$ there exists at least one directed path in $(V, \underline E)$
going from $x$ to $y$. Fix one of them arbitrarily and call it $\gamma_{(x,y)}$.
Recalling definition \eqref{flowcycl}, we construct the flow
\begin{equation}
Q:=\sum_{\left\{x,y\in V\,:\, x\neq y\right\}}\rho(x,y)Q_{\gamma_{(x,y)}}\,.
\label{Qdarho}
\end{equation}
This flow is finitely decomposable by definition and it
satisfies $\div Q=\mu_1-\mu_2$. Indeed, using \eqref{divQ-fd} and the fact that $\rho$ is a compatible coupling between $\mu_1$ and $\mu_2$, we have
\begin{eqnarray}
& &\div Q(x) =  \sum_{\gamma\,:\,\gamma^-=x} \rho(\gamma^-,\gamma^+)-\sum_{\gamma\,:\,\gamma^+=x}\rho(\gamma^-,\gamma^+)\\
& &= \sum_{y\,:\, x\leq y} \rho(x,y)-\sum_{y\,:\, y\leq x}\rho(y,x)=\mu_1(x)-\mu_2(x)\,.
\end{eqnarray}

\subsubsection{(2)$\Rightarrow$ (3)}\label{moskow}
Let $Q$ be a flow on a finite acyclic digraph $(V, \underline E)$ such that $\div Q=\mu_1-\mu_2$.
In order to generate a compatible coupling $\rho$ between $\mu_1$ and $\mu_2$, we use a variation of the algorithmic construction in \cite{PS} that is a discrete version of the original decomposition due to S.K. Smirnov on bounded domains of $\mathbb R^n$ \cite{SS} and  associates a coupling to a finite acyclic flow.\\
 Define $V_-:=\left\{x\in V\,:\,\mu_1(x) > \mu_2(x) \right\}$ and $V_+:=\left\{x\in V\,:\,\mu_2(x) > \mu_1(x) \right\}$.
First of all we show that it is possible to decompose the flow like
\begin{equation}\label{finite-decomp2}
Q=\sum_nq_n Q_{\gamma_n}\,
\end{equation}
where the paths $\gamma_n$ are such that
$\gamma_n^-\in V_-$ and $\gamma_n^+\in V_+$ for any $n$. Consider any finite decomposition of $Q$
and suppose that for example there exists a site $x\in V_-$ and a $m$
such that $\gamma_{m}^+=x$ (the other cases can be handled similarly).
Since by definition $\mu_1(x)> \mu_2(x)$ there exist necessarily some paths $\left\{\gamma_n\right\}_{n\in \mathcal N}$
of the decomposition such that $\gamma_n^-=x$ for any $n\in \mathcal N$ and moreover $\sum_{n\in \mathcal N}q_n>q_{m}$. We can then find
some weights $\left\{q'_n\right\}_{n\in \mathcal N}$ such that $\sum_{n\in \mathcal N}q'_n=q_{m}$ and $q'_n\leq q_n$.
With these weights we construct the new decomposition
\begin{equation}\label{bella-dec}
\sum_{n\in \mathcal N}\Big[q'_n Q_{\gamma_{m}\star \gamma_n}+ (q_n-q'_n)Q_{\gamma_n}\Big]+\sum_{n\not\in \mathcal N\cup m}q_nQ_{\gamma_n}\,.
\end{equation}
Since $(V, \underline E)$ is acyclic the paths obtained by concatenation are still self avoiding.
Performing a finite number of times a procedure of this type the final decomposition will have the required property.

Consider now a decomposition such that $\gamma_n^-\in V_-$ and $\gamma_n^+\in V_+$ for any $n$. This condition
immediately implies that
\begin{equation}
\sum_{\{n\,:\, \gamma_n^-=x\}} q_n= \mu_1(x)-\mu_2(x)\,, \qquad x\in V_-\,.
\label{sconfitta}
\end{equation}
We get
\begin{equation}
\sum_{n}q_n = \sum_{x\in  V_-}\sum_{\{n\,:\, \gamma_n^-=x\}} q_n = \sum_{x\in  V_-} \big[\mu_1(x)-\mu_2(x)\big]
=\frac 12 \sum_x\left|\mu_1(x)-\mu_2(x)\right|\,.
\label{cambio}
\end{equation}
In particular we deduce that the l.h.s. of \eqref{cambio} is smaller or equal than 1.

Using this special decomposition
we can construct the coupling. We define
\begin{equation}
\rho(x,y):=\left\{
\begin{array}{ll}
\min\left\{\mu_1(x),\mu_2(x)\right\} &  \textrm{if}\ x=y\,,\\
\sum_{\left\{n\,:\, \gamma_n^-=x\,, \gamma_n^+=y\right\}}q_n & \textrm{if}\ x\neq y\,.
\end{array}
\right.
\label{funziona}
\end{equation}
Using \eqref{sconfitta} and the analogous formula for $V_+$ it is easy
to verify that $\rho$ defined in \eqref{funziona} is a coupling between
$\mu_1$ and $\mu_2$. This coupling is clearly compatible
since if there exists a $\gamma_n$ such that $\gamma_n^-=x$ and $\gamma_n^+=y$
then necessarily $x\leq y$. This completes the proof.\\

Note that for the coupling given above we have
\begin{equation}\label{dopofunziona}
\sum_{x\neq y}\rho(x,y)=\sum_nq_n=\frac 12\sum_x\left|\mu_1(x)-\mu_2(x)\right|\,,
\end{equation}
so that any coupling constructed in this way is an optimal one.

\subsection{Proof of Theorem \ref{ilteorema-infinito}}
\label{Strassen-infinito}

\subsubsection{(2) $\Rightarrow$ (1)}
Let $Q$ be a finitely decomposable flow such that $\div Q=\mu_1-\mu_2$. Then, recalling
\eqref{divQ-fd} and using the summability of the weights $q_n$, we have for any increasing function $f\in L^\infty(V)$
\begin{equation*}
\mu_2(f)-\mu_1(f)=-\sum_{x\in V}f(x)\div Q(x)=\sum_{n}q_n\left(f\left(\gamma_n^+\right)-f\left(\gamma_n^-\right)\right)\geq 0\,.
\end{equation*}

\subsubsection{(1) $\Rightarrow$ (2)}

We start with a preliminary result.
Let $D\subseteq L^1(V)$ be the subset of functions that can be obtained
as divergence of a finitely decomposable flow. The subset $D$ is clearly convex
and we have the following result.
\begin{lemma}
The subset $D$ is closed in $L^1(V)$.
\end{lemma}
\begin{proof}
Let  $\{f^{(n)}\}_n \subset D$ be a sequence which converges to $f\in L^1(V)$. Since $f^{(n)}\in D$ then there exists a sequence of
finitely decomposable flows $Q^{(n)}$ such that $\div Q^{(n)}=f^{(n)}$. We need to show that there exists
a finitely decomposable flow $Q$ such that $\div Q=f$.
First of all by \eqref{divQ-fd} it follows that $\sum_xf^{(n)}(x)=0$, then the  measures $\mu^{(n)}_1=f^{(n)}\vee 0$ and $\mu^{(n)}_2=-\left(f^{(n)}\wedge 0\right)$ have the same (finite) total mass.
We can then apply the algorithmic construction of Section \ref{Strassentot} to the flow $Q^{(n)}$ obtaining as a result a compatible coupling $\rho^{(n)}$ between the measures $\mu^{(n)}_1$ and $\mu^{(n)}_2$. Now, for any pair of different vertices $x\leq y$ we fix one path $\gamma_{x,y}$ going from $x$ to $y$ and we define the flow $\tilde Q^{(n)}:=\sum_{\left\{ x\neq y\right\}}\rho^{(n)}(x,y)Q_{\gamma_{x,y}}$.
By construction we have $\div \tilde Q^{(n)}=f^{(n)}$.
By \eqref{fini-ind-mea}, the sequence of flows $\left\{\tilde Q^{(n)}\right\}_{n\in \mathbb N}$ induces a sequence
$\left\{M^{(n)}\right\}_{n\in \mathbb N}$ of finite measures on the set $\Gamma$ of finite self avoiding paths.
Let us show that this sequence is tight.
Relation \eqref{dopofunziona} holds also in the infinite as can be easily proved using the limit argument.
Then we have
\begin{equation}\label{ascule2}
\sum_\gamma M^{(n)}(\gamma)=\sum_{\left\{x\leq y\, x\neq y\right\}}\rho^{(n)}(x,y)=\frac 12 \sum_x\left|f^{(n)}(x)\right|\,.
\end{equation}
Since $\{f^{(n)}\}$ converges to $f$ in $L^1(V)$, we have that the r.h.s. of \eqref{ascule2} converges to $\frac 12 \sum_x|f(x)|<+\infty$
and this implies that the l.h.s. of \eqref{ascule2} is uniformly bounded.\\
Now, let $V_n$ be an invading sequence of vertices and define
$$\tilde V_n:=\{z\,:\, \exists x,y\in V_n \ \textrm{with}\ z\in \gamma_{x,y}\}\,.$$
We define also $\Gamma_n:=\left\{\gamma\in \Gamma\,:\, \gamma \subseteq \tilde V_n\right\}$.
We have
\begin{eqnarray}
& &M^{(n)}\left(\Gamma_k^c\right)=\sum_{\left\{\gamma\,:\,\left\{\gamma^-,\gamma^+\right\}\cap V_k^c\neq \emptyset\right\}}M^{(n)}(\gamma)\nonumber \\
& &\leq \sum_{x\not\in V_k}\sum_y\rho^{(n)}(x,y)+\sum_{y\not\in V_k}\sum_x\rho^{(n)}(x,y)= \sum_{x\not \in V_k}\left|f^{(n)}(x)\right|\,.
\label{up-up}
\end{eqnarray}
Tightness follows now directly from \eqref{up-up}, the convergence of
$f^{(n)}$ to $f$ and the summability of $f$.
By Prohorov Theorem, the sequence is relatively compact.
Let $M=\sum_kq_k\delta_{\gamma_k}$ be the weak limit of a subsequence of $\{M^{(n)}\}$ and let  us consider the finitely decomposable flow $Q:=\sum_k q_kQ_{\gamma_k}$.
Then, at least along a subsequence $\{n^\prime\}$, using the weak
convergence of $\{M^{(n^\prime)}\}$ to $M$ and the (point-wise) convergence
of $f^{(n^\prime)}$ to $f$, we have
\begin{eqnarray*}
\div Q(x)&=&\sum_{k\,:\, \gamma_k^-=x}q_k-\sum_{k\,,:\, \gamma_k^+=x}q_k\\
& =&\lim_{n^\prime \to +\infty}\left(\sum_{y}\rho^{(n^\prime )}(x,y)-
\sum_{y}\rho^{(n^\prime )}(y,x)\right)\\
&=&\lim_{n^\prime \to +\infty}f^{(n^\prime )}(x)=f(x)\,.
\end{eqnarray*}
This means that $Q$ is a finite decomposable flow such that $\div Q=f$
and consequently $f\in D$.
\end{proof}
Let us suppose that $\mu_2(f)-\mu_1(f)\geq 0$ for any increasing function
$f\in L^\infty(V)$. This can be written as $\langle f,\mu_2-\mu_1\rangle_V\geq 0$ where
$\mu_2-\mu_1\in L^1(V)$ and $\langle\cdot ,\cdot \rangle_V$ is the $L^\infty(V)$, $L^1(V)$
dual paring defined by
\begin{equation}\label{dual}
\langle f,g\rangle_V:=\sum_{x\in V}f(x)g(x)\,, \qquad f\in L^\infty(V)\,, g\in L^1(V)\,.
\end{equation}
Let $Q_e$ for $e\in E$ be the flow defined by \eqref{flowcycl} for the elementary path $\gamma$
given by the single edge $e$. A function $f\in L^\infty(V)$ is increasing if and only if $\nabla f\in L^\infty(E)$, defined for $(x,y)\in E$ by $\nabla f (x,y)=f(y)-f(x)$,
is such that
\begin{equation}\label{incr-dual}
\langle \nabla f,Q_e\rangle_E\geq 0\,, \qquad \forall e\in E\,,
\end{equation}
where $\langle \cdot,\cdot \rangle_E$ is the dual pairing for functions on edges.

We need to show that $\mu_1-\mu_2\in D$.
Let us suppose by contradiction that this is not the case. Since $D$ is convex and
closed, by Hahn-Banach Theorem we deduce that
there exists an $f^*\in L^\infty (V)$ such that
\begin{equation}\label{hahn}
\left\{
\begin{array}{l}
\langle f^*, \div Q\rangle_V<0\,, \qquad \forall\  Q\ \textrm{finitely\ decomposable}\,,\\
\langle f^*, \mu_1-\mu_2\rangle_V>0\,.
\end{array}
\right.
\end{equation}
In particular by the first inequality we have that
$$
-\langle f^*, \div Q_e\rangle_V=\langle \nabla f^*,Q_e\rangle_E>0\,, \qquad \forall e\in E\,,
$$
which means that $f^*$ is increasing. This fact together with the second inequality in
\eqref{hahn} gives a contradiction.

\subsubsection{(3) $\Rightarrow$ (2)}
The proof of this implication is the same given for the finite case.\\

\subsubsection{(2) $\Rightarrow$ (3)}
We need to extend the construction given in the proof of Theorem \ref{ilteorema} to the infinite case.
Let $Q$ be a
finitely decomposable flow such that $\div Q=\mu_1-\mu_2$.  Let $V_n$ be an invading
sequence of vertices such that $\cup_{k\leq n}\gamma_k \subseteq V_n$ where the paths $\gamma_k$
are the ones involved in the finite decomposition \eqref{finite-decomp} of $Q$.

For each $n$ we consider the finite digraph having vertices $V_n\cup \{g\}$ where
$g$ is a ghost site. The set of edges $E_n$ contains all edges $(x,y)\in E$ such that
$x,y\in V_n$, moreover it contains the edges $(g,z)$ or $(z,g)$ with $z\in V_n$ if respectively
there exists an $(x,z)\in E$ such that
$x\not\in V_n$ or there exists an $(z,y)\in E$ such that
$y\not\in V_n$. The digraph $(V_n\cup \{ g\} ,E_n)$ is not necessarily acyclic.
Starting from the flow $Q$ on $(V,E)$ we construct a flow $Q_n$
on $(V_n\cup \{g\} ,E_n)$ as follows
\begin{equation}
Q_n(x,y):=\left\{
\begin{array}{ll}
Q(x,y) & \textrm{if} \ x,y\in V_n\,, \\
\sum_{z\not\in V_n}Q(x,z) & \textrm{if}\ y=g\,, x\in V_n \\
\sum_{z\not\in V_n}Q(z,y) & \textrm{if}\ x=g\,, y\in V_n\,.
\end{array}
\right.
\label{messa}
\end{equation}
The series appearing in \eqref{messa} are convergent since $Q$ has a well defined divergence.
We have
$$
\div Q_n(x)=\left\{
\begin{array}{ll}
\div Q(x) =\mu_1(x)-\mu_2(x)\,,& \textrm{if}\ x\in V_n\,,\\
\sum_{y\not\in V_n} \big(\mu_1(y)-\mu_2(y)\big)\,, & \textrm{if}\ x=g\,.\\
\end{array}
\right.
$$
In general the flow $Q_n$ will not be acyclic, but removing cycles we can obtain an acyclic flow $Q^*_n$ having
the same divergence as $Q_n$.
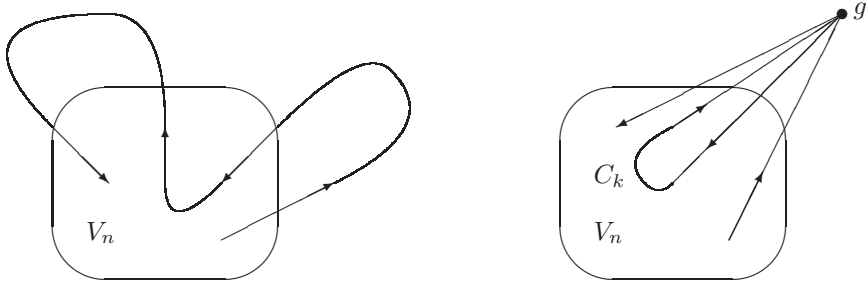
\begin{figure}\label{erasure}
\setlength{\unitlength}{1.5cm}
\begin{picture}(4.5,3)

\put(0, 1){\oval(2, 1.7)}
\put(4.5, 1){\oval(2, 1.7)}
\put(-0.7,0.5){$V_n$}
\put(3.8,0.5){$V_n$}
\put(3.8,1.0){$C_k$}

\put(6,2.5){\circle*{0.1}}
\put(6.1,2.5){$g$}
\put(5,0.5){\line(1,2){1}}
\put(5,0.5){\vector(1,2){0.3}}
\put(6,2.5){\vector(-2,-1){2}}



\put(0.5,0.5){\vector(2,1){1}}
\put(1,1.5){\vector(-1,-1){0.5}}
\put(0,1){\vector(0,1){0.5}}
\put(-1,1.5){\vector(1,-1){0.5}}
\qbezier(1.5,1)(2.5,1.5)(2,2)
\qbezier(1,1.5)(1.75,2.25)(2,2)
\qbezier(0.5,1)(0,0.5)(0,1)
\qbezier(0,1.5)(0,2.5)(-0.5,2.5)
\qbezier(-1.0,1.5)(-2.0,2.5)(-0.5,2.5)


\put(4.5,1.5){\line(3,2){1.5}}
\put(4.5,1.5){\vector(3,2){0.3}}
\put(6,2.5){\line(-1,-1){1.5}}
\put(6,2.5){\vector(-1,-1){1.2}}

\qbezier(4.5,1)(4.375,0.875)(4.25,1)
\qbezier(4.5,1.5)(4,1.25)(4.25,1)

\end{picture}
\caption{A path $\gamma_k$ entering and exiting several times in and from $V_n$ (left). The same path
after the identification of the sites outside $V_n$ with the ghost site $g$ (right). The cycle $C_k$ containing $g$
has to be removed.}
\end{figure}
This is done in two steps.
The first one is as follows. Consider a path $\gamma_k$ of the finite decomposition of $Q$ such that it exits from and enters in $V_n$ several times. This is possible only if $k>n$. After the identification of all the sites
outside $V_n$ with the single ghost site $g$ the path will not be anymore self--avoiding (see Figure $1$). If we remove the cycles that have been created (all of which will contain the ghost site) the self--avoiding path that we obtain will exit from $V_n$ or enter in $V_n$ at most once. The corresponding transformation on the flow $Q_n$ is the following. Let us consider the example of Figure $1$ and call $C_k$ the cycle in $(V_n\cup \{g\} ,E_n)$ created after the identification of all the sites
outside $V_n$ with the single ghost site $g$. By construction we have $q_kQ_{C_k}\subseteq Q_n$ so that
$Q_n-q_kQ_{C_k}$ is still a flow on $(V_n\cup \{g\} ,E_n)$ having the same divergence as $Q_n$.
We consider iteratively each path $\gamma_k$ of the original cyclic
decomposition with $k>n$ and remove the cycles as illustrated above.
After this deletion procedure
the flow $Q_n^1$ obtained is still not necessarily acyclic.

The second step is as follows. Since the digraph $(V_n\cup \{g\} ,E_n)$ is finite we can consider a finite number of cycles $C'_i$ and weights $q'_i$ such that $Q_n^*:=Q_n^1-\sum_iq'_iQ_{C_i'}$ is acyclic (see for example the construction in \cite{Io}).
The choice of the cycles and weights in this last step is arbitrary.

To the values of the flow
$Q^*_n$ on edges entering or exiting from the ghost site $g$ can contribute only
the paths $\gamma_k$ with $k>n$. Moreover the deletion procedure outlined above guarantees
that every single path $\gamma_k$ with $k>n$ may contribute
no more than once to the total flux entering in $g$ (that is $\sum_{x\in V_n}Q^*_n(x,g)$) or to the total
flux exiting from $g$ (that is $\sum_{x\in V_n}Q^*_n(g,x)$).
This means that we have the bounds
\begin{equation}\label{fl-bon}
\left\{
\begin{array}{l}
\sum_{x\in V_n}Q_n^*(x,g)   \leq \sum_{k>n} q_k\,, \\
\sum_{x\in V_n}Q_n^*(g,x)   \leq \sum_{k>n} q_k\,.
\end{array}
\right.
\end{equation}
Let us now consider the flow $\tilde Q^*_n$ such that $\tilde Q^*_n(x,y)=Q^*_n(x,y)$
when both $x$ and $y$ belong to $V_n$ and $\tilde Q^*_n(x,y)=0$ otherwise.
The flow $\tilde Q^*_n$ can be naturally interpreted as a flow on the original
digraph $(V,E)$ and by construction $\tilde Q^*_n\leq Q$.
We have also
\begin{equation}\label{san-ben}
\div \tilde Q^*_n(x) =\mu_1(x)-\mu_2(x)+\delta_n(x)\,, \qquad x\in V_n\,,
\end{equation}
where by \eqref{fl-bon} we have
\begin{equation}\label{oravero}
\sum_x |\delta_n(x)|\leq 2 \sum_{k> n} q_k\,.
\end{equation}
We define the following sequences of positive measures on $V$
\begin{equation}
\label{aer}
\mu_1^{(n)}(x):=\left\{
\begin{array}{ll}
 \mu_1(x)+\delta_n(x) & \textrm{if}\ x\in V_n\,, \delta_n(x)>0\,, \\
\mu_1(x) & \textrm{if}\ x\in V_n\,, \delta_n(x)\leq 0\,, \\
0 & \textrm{if}\ x\not \in V_n\,,
\end{array}
\right.
\end{equation}
\begin{equation}
\label{eo}
\mu_2^{(n)}(x):=\left\{
\begin{array}{ll}
 \mu_2(x)-\delta_n(x) & \textrm{if}\ x\in V_n\,, \delta_n(x)<0\,, \\
\mu_2(x) & \textrm{if}\ x\in V_n\,, \delta_n(x)\geq 0\,, \\
0 & \textrm{if}\ x\not \in V_n\,.
\end{array}
\right.
\end{equation}
We have $\div \tilde Q^*_n=\mu^{(n)}_1- \mu^{(n)}_2$ and $\sum_x\left( \mu^{(n)}_1(x)- \mu^{(n)}_2(x)\right)=0$.
Since $\left|E\left(\tilde Q_n^*\right)\right|<+\infty$ we can apply the finite algorithmic construction used in the proof of Theorem \ref{ilteorema} (which works also for pairs of finite positive measures having the same total mass) obtaining from the acyclic flow $\tilde Q^*_n$ a measure
$\rho^{(n)}$ on $V\times V$ such that $\sum_x  \rho^{(n)}(x,y)=\mu_2^{(n)}(y)$
and $\sum_y  \rho^{(n)}(x,y)=\mu_1^{(n)}(x)$. Since the mass is transported
along edges of the original digraph $(V,E)$ we deduce that $\rho^{(n)}(x,y)=0$ if $x\not\leq y$.
We show that the sequence of positive measures
$\rho^{(n)}$ has total mass uniformly bounded and is tight.
The bound on the mass follows by
$$
\sum_{x}\sum_y\rho^{(n)}(x,y)=\sum_x\mu_1^{(n)}(x)\leq\sum_x\left[\mu_1(x)+
|\delta_n(x)|\right]\leq 1+2\sum_{k=1}^{+\infty} q_k\,.
$$
The tightness follows by the following argument.
Fix an arbitrary $\epsilon >0$ and let $m^*$ be an integer number such that
$$
\max\left\{\mu_1\left(V_{m^*}^c\right), \mu_2\left(V_{m^*}^c\right) \right\}<\epsilon\,,
$$
where the upper index $c$ denotes the complementary set.
Fix also $n^*$ such that $2\sum_{k=n^*}^{+\infty}q_k<\epsilon$.
Then we have for any $n>n^*$ and $m>m^*$
\begin{eqnarray*}
& &\rho^{(n)}\Big(\left(V_m\times V_m\right)^c\Big)=\rho^{(n)}\Big(V_m^c\times V_m\Big)+
\rho^{(n)}\Big(V_m\times V_m^c\Big)+\rho^{(n)}\Big(V_m^c\times V_m^c\Big)\\
&\leq&\rho^{(n)}\Big(V\times V_m^c\Big)+
\rho^{(n)}\Big(V_m^c\times V\Big)=\mu^{(n)}_2(V_m^c)+\mu^{(n)}_1(V_m^c)\\
& \leq & \mu_2(V_m^c)+\mu_1(V_m^c)+\sum_x|\delta_n(x)|\leq 3\epsilon\,.
\end{eqnarray*}

By Prokhorov Theorem there exists a subsequence, that we still call $\rho^{(n)}$, that is weakly convergent.
Let us call $\rho$ its weak limit. Since by \eqref{oravero}, \eqref{aer} and \eqref{eo} $\mu_i^{(n)}(x)\to \mu_i(x)$ for any $x$ we immediately
obtain
$$
\sum_y\rho(x,y)=\lim_{n\to +\infty}\sum_y\rho^{(n)}(x,y)=\lim_{n\to +\infty} \mu_1^{(n)}(x)=\mu_1(x)\,.
$$
A similar result holds for $\mu_2$.
This means that $\rho$ is a coupling between $\mu_1$ and $\mu_2$. Since $\rho^{(n)}(x,y)=0$ when $x\not\leq y$ this will be true
also for the limiting measure $\rho$. This completes the proof.\\


\section{Proof of Propositions \ref{proposizione} and  \ref{proposizione3}}

\label{PPP}
In this section we give the proofs of the auxiliary Propositions \ref{proposizione} and \ref{proposizione3} that are
useful to identify finitely decomposable flows.

\begin{proof}[Proof of Proposition \ref{proposizione}]
Consider the invading sequence $V_n$ for which  the outgoing flux towards infinity
$\sum_{x\in V_n,y\not \in V_n}Q(x,y)=:\phi^+_n$ is converging to zero when $n$ diverges.
Since
\begin{equation}
\sum_{x\in V_n,y\not \in V_n}Q(x,y)-\sum_{x\not\in V_n,y \in V_n}Q(x,y)= \mu_1(V_n)-\mu_2(V_n)\,,
\label{solo}
\end{equation}
then also the incoming flux from infinity $\sum_{x\not\in V_n,y \in V_n}Q(x,y):=\phi^-_n$ is converging to zero
when $n$ diverges. All the series in \eqref{solo} are convergent since $|V_n|<+\infty$ and the series appearing in the definition
of $\div Q$ \eqref{divQ} are supposed to be summable.

For each $n$ we consider the finite digraph having vertices $V_n\cup g_-\cup g_+$ where
$g_\pm$ are ghost sites. The set of edges $E_n$ contains all edges $(x,y)\in E$ such that
$x,y\in V_n$, moreover it contains edges of type $(g_-,z)$ or $(z,g_+)$ with $z\in V_n$ if respectively
there exists an $(x,z)\in E$ such that
$x\not\in V_n$  or there exists an $(z,y)\in E$ such that $y\not\in V_n$. Since the original graph is acyclic also this new finite digraph is acyclic.
Starting from the flow $Q$ on $(V,E)$ we associate to it a flow $Q_n$
on $(V_n\cup g_-\cup g_+ ,E_n)$ as follows
$$
Q_n(x,y):=\left\{
\begin{array}{ll}
Q(x,y) & \textrm{if} \ x,y\in V_n\,, \\
\sum_{z\not\in V_n}Q(x,z) & \textrm{if}\ y=g_+\,, x\in V_n \\
\sum_{z\not\in V_n}Q(z,y) & \textrm{if}\ x=g_-\,, y\in V_n\,.
\end{array}
\right.
$$
We have
$$
\div Q_n(x) =\div Q(x)=\mu_1(x)-\mu_2(x)\,, \qquad  \ x\in V_n\,.
$$
We have also $\div Q_n(g_-)=\phi_n^-$ and $\div Q_n(g_+)=-\phi_n^+$. Let us introduce two
sequences of positive measures on $V_n\cup g_-\cup g_+$ defined as
\begin{equation}\label{sissi}
\mu_1^{(n)}(x):=\left\{
\begin{array}{ll}
\mu_1(x)& \textrm{if}\ x\in V_n\,, \\
\phi_n^- & \textrm{if}\ x=g_-\\
0 & \textrm{if}\ x=g_+\,,
\end{array}
\right.
\end{equation}
\begin{equation}\label{sissi1}
\mu_2^{(n)}(x):=\left\{
\begin{array}{ll}
\mu_2(x)& \textrm{if}\ x\in V_n\,, \\
\phi_n^+ & \textrm{if}\ x=g_+\\
0 & \textrm{if}\ x=g_-\,.
\end{array}
\right.
\end{equation}
Then $Q_n$ is a flow on a finite acyclic digraph and moreover $\div Q_n=\mu_1^{(n)}-\mu_2^{(n)}$. Applying the finite algorithmic construction we obtain a finite decomposition
\begin{equation}
Q_n=\sum_mq^{(n)}_mQ_{\gamma^{(n)}_m}
\label{non-so}
\end{equation}
for suitable weights $q^{(n)}_m$ and paths $\gamma^{(n)}_m$. The paths $\gamma^{(n)}_m$
are self--avoiding paths on the digraph $\left( V_n\cup g_-\cup g_+, E_n\right)$ but to every path $\gamma$ on this digraph
it can be easily associated a self--avoiding path $\tilde \gamma$ on the original digraph $(V,E)$. This is done simply
transforming any edge $(g_-,x)\in \gamma$ into an arbitrary edge $(y,x)\in E(Q)$ with $y\not\in V_n$ and
any edge $(x, g_+)\in \gamma$ into an arbitrary edge $(x,y)\in E(Q)$ with $y\not\in V_n$. After this identification
we obtain an acyclic finitely decomposable flow on $(V,E)$
\begin{equation}
\tilde Q_n:=\sum_m q^{(n)}_mQ_{\tilde \gamma^{(n)}_m}\,.
\label{daipra}
\end{equation}
By construction we have
\begin{equation}
\tilde Q_n(x,y)=Q(x,y)\,,
\label{ny}
\end{equation}
for any $n$ big enough so that $x,y\in V_n$.

Recall that $\Gamma$ is the countable set of all finite self-avoiding paths on the digraph
$(V,E)$. Let also $\Gamma_n\subseteq \Gamma$ be the subset of all the paths $\gamma\subseteq V_n$.
To the decomposition \eqref{daipra} we associate by \eqref{fini-ind-mea} a positive and finite measure
on $\Gamma$ given by
\begin{equation}
M^{(n)}:=\sum_m q^{(n)}_m\delta_{\tilde \gamma^{(n)}_m}\,.
\label{seq-gamma}
\end{equation}
The sequence of measures  $\left\{M^{(n)}\right\}_{n\in \mathbb N}$ is a sequence
of positive and finite measures on $\Gamma$.
We now show that this sequence of measures is tight and has total mass uniformly bounded. Recall that the
coefficients $q^{(n)}_m$ in \eqref{seq-gamma} are the same of \eqref{non-so}
so that by \eqref{dopofunziona} they satisfy
\begin{equation*}
\sum_{\gamma\in \Gamma} M^{(n)}(\gamma)=\sum_m q^{(n)}_m=\frac 12\Big[\phi^+_n+\phi_n^-+\sum_{x\in V_n}\left|\mu_1(x)-\mu_2(x)\right|\Big]\,.
\end{equation*}
Since $\phi_n^\pm$ are converging to zero we have an uniform bound on the total mass.
This guarantees that the total mass of $\left\{M^{(n)}\right\}_{n\in \mathbb N}$ is uniformly
bounded.
Moreover we have
\begin{equation}
M^{(n)}\left(\Gamma_k^c\right)= \sum_{\left\{m\,:\, \tilde\gamma^{(n)}_m\subseteq V_k^c\right\}}q_m^{(n)}
+\sum_{\left\{m\,:\, \tilde\gamma_m^{(n)}\cap V_k\neq \emptyset\,,\,
\tilde\gamma_m^{(n)}\cap V_k^c\neq \emptyset \right\}}q_m^{(n)}\,.
\label{tait-paths}
\end{equation}
The first term on the right hand side of \eqref{tait-paths} is $0$ when $n\leq k$.
When $n>k$ can be estimated using \eqref{sconfitta} as
\begin{eqnarray}
& &\sum_{\left\{m\,:\, \tilde\gamma^{(n)}_m\subseteq V_k^c\right\}}q_m^{(n)}\leq
\sum_{x\not \in V_k}\sum_{\left\{m\,:\, \tilde\gamma^{(n)-}_m=x\right\}}q_m^{(n)}\nonumber \\
& &\leq\sum_{x\not\in V_k}\left|\mu_1^{(n)}(x)-\mu_2^{(n)}(x)\right|\leq \phi_n^++\phi_n^-+\sum_{x\not\in V_k}\left|\mu_1(x)-\mu_2(x)\right|\,.
\label{primastima}
\end{eqnarray}
The second term in \eqref{tait-paths} can be directly estimated by $\phi^+_k+\phi^-_k$ independently of $n$. With these bounds
the tightness of the sequence of measures $\left\{M^{(n)}\right\}_{n\in \mathbb N}$ can be easily established using the condition of zero flux towards infinity.
By Prokhorov Theorem for positive finite measures, we can then extract a weak converging subsequence that
 we still call $M^{(n)}$ and call $M:=\sum_m q_m\delta_{\tilde \gamma_m}$
its limit. This is a finite and positive measure on $\Gamma$. The function that associate to any
path $\gamma$ the value $1$ if $(x,y)\in \gamma$ and zero otherwise is continuous and bounded on $\Gamma$
endowed of the discrete topology. By \eqref{ny}
we deduce that if we construct the flow $\tilde Q:=\sum_mq_mQ_{\tilde\gamma_m}$  then we have that
$\tilde Q(x,y)=\lim_{n\to +\infty} \tilde Q_n(x,y)=Q(x,y)$. This means that $\tilde Q$ coincides with the original flow $Q$. Since $\tilde Q$ is clearly finitely decomposable we are done.
\end{proof}

\begin{proof}[Proof of Proposition \ref{proposizione3}]
Let us suppose by contradiction that $\sum_n q_nQ_{\gamma_n}$ is a finite decomposition of
$Q$ and that \eqref{london} does not converge to zero
for an invading sequence. This means that there exists an $\epsilon$ and an infinite
sequence of edges $\left\{e_i\right\}_{i\in \mathbb N}$
such that $Q(e_i)>\epsilon$ for any $i$.
Let $n^*$ be such that $\sum_{n>n^*}q_n<\epsilon$. Let $e_{i^*}$ such that
$e_{i^*}\not\in \cup_{n\leq n^*}\gamma_n$. Then we have
$$
\epsilon <Q(e_{i^*})=\sum_{n>n^*}q_nQ_{\gamma_n}(e_{i^*})<\epsilon\,,
$$
a contradiction.
\end{proof}

\section{Examples}
\label{appl}
In this section we discuss some examples of applications of Theorems \ref{ilteorema}, \ref{ilteorema-infinito}. Even is simple they are conceptually important since we use arguments that are different from the usual ones. In example 5.1 we obtain the classic condition for stochastic monotonicity on $\mathbb Z$. Instead of construct a coupling we need just to perform a discrete integration. The same happens in example 5.2. In example 5.3 we show that the problem of stochastic monotonicity has a dual problem coinciding with the non emptiness of a polyhedron in a space whose dimension is the number of independent cycles of the Hasse diagram. A discrete Poisson equation can be relevant in this dual problem. Example 5.4 is a special issue of 5.3.
In example 5.5 we generalize a classic construction. A coupling of two random variables can be constructed writing them as functions of a common random variable. If the functions satisfy a monotonicity property then the coupling is a monotone one. We show that there is a similar construction for flows that works under less restrictive conditions on the functions.
In Example 5.6 we show that there is a natural construction for flows that is the counterpart of the product coupling. The mechanism is a bit tricky and works due to the presence of a telescopic sum.
In Example 5.7 we construct some special posets where the approach based on flows applies easily giving a necessary and sufficient condition for stochastic monotonicity. In Example 5.8 we obtain very shortly the result of \cite{Ho}.

\subsection{The one dimensional case}\label{1-d}
We discuss the simplest countable poset, that is $\mathbb Z$ with the usual partial order relation. We want to get the
well known \cite{LL} necessary and sufficient conditions to have $\mu_1\preceq \mu_2$, using item $(2)$ of Theorem \ref{ilteorema-infinito}.
In this case the partial
order can be described by the Hasse diagram corresponding to the acyclic digraph $(\mathbb Z, E)$
where $E=\left\{(x,x+1)\right\}_{x\in \mathbb Z}$.
The condition $\div Q=\mu_1-\mu_2$ reads
$$
Q(x,x+1)-Q(x-1,x)=\mu_1(x)-\mu_2(x)\,,
$$
and with a finite telescopic sum for any $y<x$ we get
\begin{equation}\label{1-d-flux}
Q(x,x+1)-Q(y,y+1)=\sum_{z=y+1}^x\big(\mu_1(z)-\mu_2(z)\big)\,.
\end{equation}
By Proposition \ref{proposizione3} a necessary condition to have that $Q$ is finitely decomposable
is that $\lim_{y\to -\infty}Q(y,y+1)=0$.
Taking the limit
$y\to-\infty$ in \eqref{1-d-flux} we then get
\begin{equation}\label{1-d-limit}
    Q(x,x+1)=\sum_{z=-\infty}^x\left(\mu_1(z)-\mu_2(z)\right)\,.
\end{equation}
This means that there is at most one finitely decomposable flow having divergence equal to $\mu_1-\mu_2$
that is \eqref{1-d-limit}. Consider the invading sequence $V_n:=\left\{-n,\dots ,n\right\}$. The flux exiting
from $V_n$ coincides with $Q(n,n+1)$ that by \eqref{1-d-limit} is converging to zero when $n\to +\infty$. By Proposition
\ref{proposizione} $Q$ is finitely decomposable.
The last condition that $Q$ has to satisfy to be a flow is $Q(x,x+1)\geq 0$ for any $x\in \mathbb Z$. This condition
reads
\begin{equation}\label{1-d-final}
    \sum_{z=-\infty}^x\left(\mu_1(z)-\mu_2(z)\right)=F_1(x)-F_2(x)\geq 0 \,,\qquad \forall x\in \mathbb Z\,,
\end{equation}
where $F_i(x):=\sum_{z=-\infty}^x\mu_i(z)$ is the distribution function
of the measure $\mu_i$.

\subsection{Finite and infinite trees}\label{fait}
We consider the case of posets described by digraphs $(V,E)$ such that
the associated graph $(V,\mathcal E)$ is a tree. We discuss both the finite and the infinite case.

Let us start with the finite case.
Removing one edge of $\mathcal E$
the graph is divided into two connected components.
If the edge that has been removed is $\{x,y\}$ and $x\leq y$ we call $T_-^{\{x,y\}}$ the connected component
containing $x$ and $T_+^{\{x,y\}}$ the connected component containing $y$.
Using a discrete Gauss Green identity we get that there is a unique solution
to the equation $\div Q=\mu_1-\mu_2$ that is
\begin{equation}
Q(x,y)=\sum_{z\in T_-^{\{x,y\}}}\big(\mu_1(z)-\mu_2(z)\big)\,.
\label{domani}
\end{equation}
The left hand side of \eqref{domani} is the flux from $T_-^{\{x,y\}}$ to $T_+^{\{x,y\}}$ while
the right hand side is the sum of the divergences in $T_-^{\{x,y\}}$.
Since $Q$ has to be a flow on $(V,E)$ it must be positive and this gives
\begin{equation}
\sum_{z\in T_-^e}\left(\mu_1(z)-\mu_2(z)\right)\geq0\,, \qquad \forall e\in \mathcal E\,
\label{non}
\end{equation}
that is the necessary and sufficient condition to have $\mu_1\preceq \mu_2$.

If $(V,\mathcal E)$ is an infinite tree then the equation $\div Q=\mu_1-\mu_2$ has not
an unique solution. However, if $Q$ is a finitely decomposable flow we have
\begin{eqnarray}
Q(x,y)&=&\sum_{\{n\,:\, \gamma_n^-\in T_-^{\{x,y\}}\}}q_n-\sum_{\{n\,:\, \gamma_n^+\in T_-{\{x,y\}}\}} q_n\nonumber \\
&=&\sum_{\{z\in T_-^{\{x,y\}}\}}\div Q(z)=\sum_{\{z\in T_-^{\{x,y\}}\}}\left(\mu_1(z)-\mu_2(z)\right)\,.
\label{prima-mezza}
\end{eqnarray}
This means that there is at most one finitely
decomposable solution to the equation $\div Q=\mu_1-\mu_2$ that is still given by \eqref{domani}. Indeed, as in subsection
 \ref{1-d}, using Proposition \ref{proposizione} it can be easily shown that this solution is finitely decomposable. The positivity $Q$ gives the same condition \eqref{non} of the finite case.

\subsection{A dual problem}\label{egi}

We consider the case $|V|<+\infty$.
 A discrete vector field on $(V,\mathcal{E})$ is a  map $\phi$ on pairs of ordered vertices
$(x,y)$ with $\{x,y\}\in \mathcal{E}$ satisfying the condition $\phi(x,y)=-\phi(y,x)$.\\
Let $\Lambda(\mathcal E)$ be the vector space of discrete vector fields on $(V,\mathcal E)$.
This is a $\left|\mathcal E\right|$
dimensional vector space. The following are classic results
(see \cite{Biggs} or \cite{Io} for a short introduction) and we give just a short informal overview.
A discrete vector field $\phi$ is a gradient if there exists a function
$f: V\to \mathbb R$ such that $\phi(x,y)=f(y)-f(x)$. The divergence at $x\in V$ of a discrete vector field $\phi$ is defined by
$\div \phi (x):= \sum_{y\,:\, \{x,y\}\in \mathcal E}\phi(x,y)$.
We have the orthogonal decomposition
$$
\Lambda(\mathcal E)=\Lambda_g(\mathcal E)\oplus \Lambda_{d}(\mathcal E)\,,
$$
where $\Lambda_g(\mathcal E)$ is the $|V|-1$ dimensional subspace of gradient discrete vector fields and
$\Lambda_d(\mathcal E)$ is the $|\mathcal E|-|V|+1$ dimensional subspace of divergence free discrete vector fields.
The orthogonality is with respect to the scalar product
$$\sum_{\{x,y\}\in \mathcal E}\phi(x,y)\psi(x,y)\,, \qquad \phi,\psi \in \Lambda(\mathcal E)\,.$$
A basis for $\Lambda_d(\mathcal E)$ is obtained choosing a suitable collection of divergence free discrete vector fields naturally associated to elementary independent cycles.
Fix $(V,\mathcal T)$ a spanning tree of $(V,\mathcal E)$, in particular $|\mathcal T|=|V|-1$.
For any $e\in \mathcal E\setminus \mathcal T$ the graph $(V,\mathcal T\cup \{e\})$ contains an unique cycle with distinct vertices.
Let us fix an arbitrary orientation on this cycle. On the graph $(V,\mathcal T\cup \{e\})$ there exists a unique, up to a multiplicative factor, divergence free discrete vector field $\phi_e$. This is defined by fixing $\phi_e(x,y)=1$ if $(x,y)$ belongs to the oriented cycle, $\phi_e(x,y)=-1$ if $(y,x)$ belongs to the oriented cycle and $\phi_e(x,y)=0$ otherwise. The collection $\left\{\phi_e\right\}_{e\in \mathcal E\setminus \mathcal T}$ is a basis of $\Lambda_d(\mathcal E)$.

All the discrete vector fields satisfying
\begin{equation}
\div \phi=\mu_1-\mu_2
\label{bb2}
\end{equation}
are given by
\begin{equation}
\phi^*+\sum_{e\in \mathcal E\setminus \mathcal T}\alpha_e\phi_e\,,
\label{cicl-dec}
\end{equation}
where the $\alpha_e$ are arbitrary real numbers and  $\phi^*$ is an arbitrary solution to \eqref{bb2},
for example of gradient type.

\smallskip

Let $(V,\underline E)$ be the Hasse diagram of a finite poset and
let $(V,\underline{\mathcal E})$ be the corresponding undirected graph.
Consider also $\mu_1$ and $\mu_2$ two probability measures on $V$.
The flows on $(V,\underline E)$ having divergence coinciding
with $\mu_1-\mu_2$ are in bijection with the discrete vector fields on $(V,\underline{\mathcal E})$ having the same divergence and
such that $\phi(x,y)\geq 0$ when $(x,y)\in \underline E$. The bijection is through the natural identification
$Q(x,y)=\phi(x,y)$ when $(x,y)\in \underline E$. The remaining values of the discrete vector
field are fixed by the antisymmetry condition.

Using the above construction we obtain that there
exists a flow on $(V,\underline E)$ having divergence $\mu_1-\mu_2$ if and only if the following conditions
are satisfied.
For edges $\{x,y\}\in \underline{\mathcal E}$ that do not belong to any cycle of the basis,
we have to impose $\phi^*(x,y)\geq 0$ when $(x,y)\in \underline E$. If we call $\underline E'$ the set of remaining edges, we have to impose that there exists a collection of real numbers
$\{\alpha_e\}_{e\in \underline{\mathcal E}\setminus \mathcal T}$
such that
\begin{equation}
\phi^*(x,y)+\sum_{e}\alpha_e\phi_e(x,y)\geq 0\,,
\qquad \forall (x,y)\in \underline E'\,.
\label{bbb}
\end{equation}
Recall that $\phi_e(x,y)$ is taking just the values $-1,0,+1$.
Conditions \eqref{bbb} in the $\alpha$ variables is equivalent to the statement
that a polyhedron on $\mathbb R^{|\underline E|-|V|+1}$ obtained
as the intersection of $|\underline E'|$ half-spaces (one for each $(x,y)\in \underline E'$)
is not empty. The interesting feature is that it is a geometric problem on a space
of dimension equal to the number of independent cycles of the Hasse diagram.

\smallskip

Consider for example the Hasse
diagram of Figure 2 (left) having one single cycle and such that $\underline E'=\underline E$.
Since the Hasse diagram has only one independent cycle the
stochastic monotonicity condition will reduce to a one dimensional problem.
Choosing arbitrarily one orientation we can label vertices as $V:=\left\{1,2,\dots ,n\right\}$  and
the edges as $\mathcal E:=\big\{\{x,x+1\}\big\}_{x=1}^n$ where the sum is modulo $n$.
Equation \eqref{cicl-dec} reduces to
\begin{equation}
\phi^\alpha:=\phi^*+\alpha\phi\,,
\label{salento}
\end{equation}
where $\phi(x,x+1)=1$ for any $x$, $\alpha$ is an arbitrary real number and $\phi^*$
is any given discrete vector field such that $\div \phi^*=\mu_1-\mu_2$. We can fix for example
\begin{equation}\label{fitoro}
\phi^*(x,x+1)=\sum_{y=1}^x \big(\mu_1(y)-\mu_2(y)\big)\,, \qquad x=1,\dots ,n\,.
\end{equation}
Let $\underline E^+:=\left\{(x,y)\in \underline E\,:\, y=x+1\right\}$ and
$\underline E^-$ the complementary set. Conditions \eqref{bbb} become
\begin{equation}
\left\{
\begin{array}{ll}
\phi^*(x,y)+\alpha \geq 0\,, & (x,y)\in \underline E^+\,, \\
\phi^*(x,y)-\alpha \geq 0\,, & (x,y)\in \underline E^-\,,
\end{array}
\right.
\end{equation}
that are equivalent to the single inequality
\begin{equation}\label{vecchio}
\max_{(x,y)\in \underline E^+}\left\{\sum_{z=1}^x \big(\mu_2(z)-\mu_1(z)\big)\right\}\leq \min_{(x,y)\in \underline E^-}\left\{\sum_{z=1}^y \big(\mu_2(z)-\mu_1(z)\big)\right\}\,.
\end{equation}
Condition \eqref{vecchio} is a necessary and sufficient condition to have $\mu_1\preceq \mu_2$ on a poset
like the one on the left of Figure 2. If we consider the special case on the right hand side of Figure 2 \eqref{vecchio} becomes
\begin{equation}
\big|\mu_1(B)-\mu_2(B)\big|+\big|\mu_1(C)-\mu_2(C)\big|\leq \big(\mu_1(A)-\mu_2(A)\big)-\big(\mu_1(D)-\mu_2(D)\big)\,.
\label{el-simple}
\end{equation}
It is not immediate to get the single inequality \eqref{el-simple} without condition $(2)$ of Theorem \ref{ilteorema}.

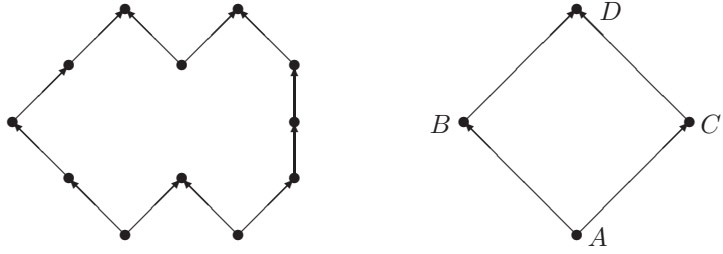
\begin{figure}\label{el-hasse}
\setlength{\unitlength}{1.5cm}
\begin{picture}(4,2.5)

\put(0,0){\circle*{0.1}}
\put(0.5,0.5){\circle*{0.1}}
\put(-0.5,0.5){\circle*{0.1}}
\put(1,0){\circle*{0.1}}
\put(1.5,0.5){\circle*{0.1}}
\put(1.5,1){\circle*{0.1}}
\put(1.5,1.5){\circle*{0.1}}
\put(-1,1){\circle*{0.1}}
\put(-0.5,1.5){\circle*{0.1}}
\put(0,2){\circle*{0.1}}
\put(0.5,1.5){\circle*{0.1}}
\put(1,2){\circle*{0.1}}

\put(0,0){\vector(-1,1){0.5}}
\put(0,0){\vector(1,1){0.5}}
\put(-0.5,0.5){\vector(-1,1){0.5}}
\put(1,0){\vector(-1,1){0.5}}
\put(1,0){\vector(1,1){0.5}}
\put(-1,1){\vector(1,1){0.5}}
\put(-0.5,1.5){\vector(1,1){0.5}}
\put(0.5,1.5){\vector(-1,1){0.5}}
\put(0.5,1.5){\vector(1,1){0.5}}
\put(1.5,1.5){\vector(-1,1){0.5}}
\put(1.5,0.5){\vector(0,1){0.5}}
\put(1.5,1){\vector(0,1){0.5}}

\put(4,0){\vector(-1,1){1}}
\put(4,0){\vector(1,1){1}}
\put(3,1){\vector(1,1){1}}
\put(5,1){\vector(-1,1){1}}
\put(4,0){\circle*{0.1}}
\put(4.1,-0.1){$A$}
\put(3,1){\circle*{0.1}}
\put(2.7,0.9){$B$}
\put(5,1){\circle*{0.1}}
\put(5.1,0.9){$C$}
\put(4,2){\circle*{0.1}}
\put(4.2,1.9){$D$}
\end{picture}
\caption{An Hasse diagram with one single cycle such that $\underline E'=\underline E$ (left).
The special case of the Hasse diagram of an elementary lattice (right).}
\end{figure}

\subsection{The two dimensional case}
Let us consider for simplicity two probability measures $\mu_1$ and $\mu_2$ on
the set $V=\mathbb{Z}^2 \cap \left([0,N]\times [0,M]\right)$. We denote by $\leq$ the usual partial
order relation on $\mathbb{Z}^2$, i.e., $(x_1,x_2)\leq (y_1,y_2)$ if $x_i\leq y_i$ for $i=1,2$.
Then $(V, \leq)$ is a poset with Hasse diagram as in Figure 3.
 We apply the general framework of section \ref{egi}. The number of independent cycles is $NM$ and in Figure 3 it is shown a choice of a basis of cycles one for each face of the squared lattice.
\begin{figure}
\setlength{\unitlength}{5cm}
\begin{picture}(1,1)

\multiput(0,0)(0.2,0){5}%
{\vector(0,1){0.2}}
\multiput(0,0.2)(0.2,0){5}%
{\vector(0,1){0.2}}
\multiput(0,0.4)(0.2,0){5}%
{\vector(0,1){0.2}}
\multiput(0,0.6)(0.2,0){5}%
{\vector(0,1){0.2}}
\multiput(0,0)(0.2,0){5}%
{\vector(1,0){0.2}}
\multiput(0,0.2)(0.2,0){5}%
{\vector(1,0){0.2}}
\multiput(0,0.4)(0.2,0){5}%
{\vector(1,0){0.2}}
\multiput(0,0.6)(0.2,0){5}%
{\vector(1,0){0.2}}
\multiput(0,0.8)(0.2,0){5}%
{\vector(1,0){0.2}}
\multiput(1,0)(0,0.2){4}%
{\vector(0,1){0.2}}

\put(0.1,0.1){\circle{0.14}}\put(0.17,0.11){\vector(0,1){0.00001}}
\put(0.3,0.3){\circle{0.14}}\put(0.37,0.31){\vector(0,1){0.00001}}
\put(0.5,0.3){\circle{0.14}}\put(0.57,0.31){\vector(0,1){0.00001}}
\put(0.3,0.1){\circle{0.14}}\put(0.37,0.11){\vector(0,1){0.00001}}
\put(0.5,0.1){\circle{0.14}}\put(0.57,0.11){\vector(0,1){0.00001}}
\put(0.7,0.1){\circle{0.14}}\put(0.77,0.11){\vector(0,1){0.00001}}
\put(0.9,0.1){\circle{0.14}}\put(0.97,0.11){\vector(0,1){0.00001}}
\put(0.1,0.3){\circle{0.14}}\put(0.17,0.31){\vector(0,1){0.00001}}
\put(0.7,0.3){\circle{0.14}}\put(0.77,0.31){\vector(0,1){0.00001}}
\put(0.9,0.3){\circle{0.14}}\put(0.97,0.31){\vector(0,1){0.00001}}

\put(0.1,0.5){\circle{0.14}}\put(0.17,0.51){\vector(0,1){0.00001}}
\put(0.7,0.5){\circle{0.14}}\put(0.77,0.51){\vector(0,1){0.00001}}
\put(0.9,0.5){\circle{0.14}}\put(0.97,0.51){\vector(0,1){0.00001}}
\put(0.3,0.5){\circle{0.14}}\put(0.37,0.51){\vector(0,1){0.00001}}
\put(0.5,0.5){\circle{0.14}}\put(0.57,0.51){\vector(0,1){0.00001}}

\put(0.1,0.7){\circle{0.14}}\put(0.17,0.71){\vector(0,1){0.00001}}
\put(0.7,0.7){\circle{0.14}}\put(0.77,0.71){\vector(0,1){0.00001}}
\put(0.9,0.7){\circle{0.14}}\put(0.97,0.71){\vector(0,1){0.00001}}
\put(0.3,0.7){\circle{0.14}}\put(0.37,0.71){\vector(0,1){0.00001}}
\put(0.5,0.7){\circle{0.14}}\put(0.57,0.71){\vector(0,1){0.00001}}


\end{picture}
\caption{The Hasse diagram for a finite bi-dimensional grid and the oriented independent elementary cycles
associated to the elementary faces of the lattice. } \label{fig3}
\end{figure}
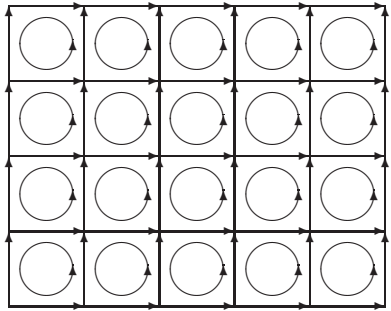
The problem of establishing wether $\mu_1\preceq \mu_2$ is equivalent
to the problem of determining if a polyhedron in dimension $NM$ identified by $2NM+M+N$ inequalities is empty or not. The inequalities are one for each edge of the Hasse diagram. Let $\alpha\left(x_1,x_2\right)$ be the real variable associated to the elementary cycle centered in $\left(x_1+\frac 12,x_2+\frac 12\right)$. Let also $\phi^*$ be a solution of \eqref{bb2}.
We can for example consider
\begin{equation}\label{2dfi}
\left\{
\begin{array}{l}
\phi^*\big((x_1,x_2),(x_1+1,x_2)\big)=\frac 12 \big(\left[F_1-F_2\right](x_1,x_2)-\left[F_1-F_2\right](x_1,x_2-1)\big)\,,\\
\phi^*\big((x_1,x_2),(x_1,x_2+1)\big)=\frac 12 \big(\left[F_1-F_2\right](x_1,x_2)-\left[F_1-F_2\right](x_1-1,x_2)\big)\,,
\end{array}
\right.
\end{equation}
where for $i=1,2$,  $F_i(x_1,x_2):=\sum_{(y_1,y_2)\leq (x_1,x_2)}\mu_i(y_1,y_2)$ is the distribution function of $\mu_i$.
The inequalities can be summarized by
\begin{equation}
\left\{
\begin{array}{l}
\frac{\left[F_1-F_2\right](x_1,x_2)-\left[F_1-F_2\right](x_1-1,x_2)}{2}\geq \alpha(x_1,x_2)-\alpha(x_1-1,x_2)\,,\\
\frac{\left[F_1-F_2\right](x_1,x_2)-\left[F_1-F_2\right](x_1,x_2-1)}{2}\geq \alpha(x_1,x_2-1)-\alpha(x_1,x_2)\,,
\end{array}
\right.\label{maurocalibani}
\end{equation}
that have to be satisfied for any vertex $(x_1,x_2)$ of the grid.
Clearly, when in \eqref{maurocalibani} it appears a variable $\alpha$ associated to an elementary face outside of the grid we mean that its value is zero.

A complete characterization of when inequalities \eqref{maurocalibani} determine a non empty polyhedron is difficult but it can be given in some special cases like for example a strip ($M=1$). However it is easy to find sufficient conditions to have $\mu_1\preceq \mu_2$. For example choosing $\alpha=\pm\frac 12\left[F_1-F_2\right]$ we deduce that if $F_1-F_2$ is  increasing in one of the two coordinates then $\mu_1\preceq \mu_2$.

A similar scheme can be developed for planar posets.

\subsection{A generalized construction}
A very general and much used construction of a compatible coupling is obtained considering two functions $G_1$ and $G_2$ defined on a set $\Omega$, taking values on the poset $(V,\leq)$ and such that $G_1(\omega)\leq G_2(\omega)$ for any $\omega\in \Omega$. Given a random variable $U$ taking values on $\Omega$ the joint law $\rho$ of the random variables $(X_1,X_2)=(G_1(U),G_2(U))$ is a compatible coupling between the distribution $\mu_1$ of $X_1$ and the distribution $\mu_2$ of $X_2$ so that $\mu_1\preceq\mu_2$. Using flows the argument is as well elementary. For any $x\leq y$ fix a path $\gamma_{x,y}$ on $(V,E)$. Then
$$\sum_{\omega\in\Omega} \mathbb P(U=\omega)Q_{\gamma_{G_1(\omega),G_2(\omega)}}=\sum_{x,y}\rho(x,y)Q_{\gamma_{x,y}}$$
is a flow with divergence coinciding with $\mu_1-\mu_2$. The coupling argument does not work if $G_1\not\leq G_2$ while the flow argument can still work generalizing this classic construction.

We illustrate the simplest possible approach that can be generalized in several ways. Recall that $(V,\mathcal E)$ is the un-oriented graph associated to $(V,E)$. To any pair $x,y\in V$ (not necessarily ordered) we associate a fixed path $\gamma_{x,y}$ in $(V,\mathcal E)$ going from $x$ to $y$. This is a sequence $(x_0,x_1, \dots x_n)$ such that $x_0=x$, $x_n=y$ and $\{x_i,x_{i+1}\}\in \mathcal E$. To any path $\gamma$ on $(V,\mathcal E)$ we associate the discrete vector field $\phi_\gamma$ defined by
\begin{equation}\label{fest}
\phi_\gamma(u,v):=\left\{
\begin{array}{ll}
1 & \textrm{if}\ (u,v)\in \gamma\\
-1 & \textrm{if}\ (v,u)\in \gamma\\
0 & \textrm{otherwise}\,.
\end{array}
\right.
\end{equation}
We consider the discrete vector field
\begin{equation}\label{evai}
\phi=\sum_{\omega\in\Omega}\mathbb P(U=\omega)\phi_{\gamma_{G_1(\omega),G_2(\omega)}}=\sum_{x,y}\rho(x,y)\phi_{\gamma_{x,y}}\,.
\end{equation}
If $\phi(x,y)\geq 0$ for any $(x,y)\in E$ then the flow on $(V,E)$ defined by $Q(x,y)=\phi(x,y)$ has divergence $\mu_1-\mu_2$ and we deduce $\mu_1\preceq \mu_2$. The basic idea is the following. It may happen that $G_1(U)\not\leq G_2(U)$ that corresponds to negative flows across some edges in $(V,E)$. Nevertheless the total net flow across each edge is positive and this is enough to prove $\mu_1 \preceq \mu_2$.

A simple illustrative case is the following. Let $U_1$ be a random variable taking values on $V=\mathbb Z^2\cap\left([0,N]\times[0,M]\right)$ and having distribution $\mu$. The random variable $U_2$ is obtained moving the random lattice point $U_1$ uniformly at random on one of its 4 nearest neighbors vertices of $\mathbb Z^d$. If this point is outside the rectangle $[0,N]\times[0,M]$ then $U_2=U_1$. We fix $U=(U_1,U_2)$ and $X_1=G_1(U_1,U_2)=U_1$ and $X_2=G_2(U_1,U_2)=U_2$. We have that the law of $\big(G_1(U_1,U_2),G_2(U_1,U_2)\big)$ is not a monotone coupling of $\mu_1=\mu$ and $\mu_2$ the law of $X_2$. The discrete vector field
\eqref{evai} is however
\begin{equation}
\label{evai2}
\left\{
\begin{array}{ll}
\phi\left(x, x+(1,0)\right)=\frac 14\left[\mu(x)-\mu(x+(1,0))\right] & x_1=0,\dots,N-1;\, x_2=0,\dots , N\,,\\
\phi\left(x, x+(0,1)\right)=\frac 14\left[\mu(x)-\mu(x+(0,1))\right] & x_1=0,\dots,N;\, x_2=0,\dots , N-1\,.
\end{array}
\right.
\end{equation}
We deduce immediately that if $\mu$ is decreasing (i.e. $\mu(x)\geq\mu(x+(1,0))$ and $\mu(x)\geq\mu(x+(0,1))$ for edges belonging to the rectangle) then the vector field is positive along increasing directions and $\mu_1\preceq \mu_2$.

\subsection{Product couplings and flows}\label{pcef}
Let $(V,\leq)$ be a finite poset with associated the Hasse diagram $(V,\underline E)$ and consider the product partial order on $V^N$. An element $\eta\in V^N$ is written as
$\eta=(\eta(1),\dots,\eta(N))$ and for $\eta, \xi\in V^N$ we write $\eta\leq\xi$ if
$\eta(i)\leq\xi(i)$ for any $i$.
Given $\eta\in V^N$, $x\in V$ and $i=1,\dots ,N$ we denote by
$$\eta^{i}_{x}=(\eta(1),\dots,\eta(i-1),x,\eta(i+1),\dots ,\eta(N))$$
the element of $V^N$
with $\eta(i)$ replaced by $x$. The Hasse diagram $(V^N,\underline{E}^N)$ for the product poset has a directed edge $(\eta,\xi)$ if and only if $\xi=\eta^{i}_{x}$ for some $i,x$ and $(\eta(i),x)\in \underline E$.

Let $\mu_1^i$ and $\mu_2^i$ $i=1,\dots,N$ be a collection of probability measures on $V$ such that for any $i$ we have $\mu_1^i\preceq \mu_2^i$.

Then for the product measures we have $\bigotimes_{i=1}^N\mu_1^i\preceq \bigotimes_{i=1}^N\mu_2^i$: indeed, if $\rho^i$ is a monotone coupling between $\mu_1^i$ and $\mu_2^i$ then $\bigotimes_{i=1}^N\rho^i$ is a monotone coupling between $\bigotimes_{i=1}^N\mu_1^i$ and $\bigotimes_{i=1}^N\mu_2^i$.

Let us illustrate that there is an equivalent construction with flows. Let $Q^i$ be a flow on $(V,\underline E)$ such that $\div Q^i(x)=\mu_1^i(x)-\mu_2^i(x)$. Let us define
\begin{equation}\label{alfafin}
\gamma^i(\eta)=\left[\prod_{j<i}\mu_2^j(\eta(j))\right]\left[\prod_{j>i}\mu_1^j(\eta(j))\right]
\end{equation}
and observe that \eqref{alfafin} does not depend on $\eta(i)$. We define the flow $Q$ on $(V^N,\underline E^N)$ as
\begin{equation}\label{fluflu}
Q(\eta,\eta^{i}_{x})=\gamma^i(\eta)Q^i(\eta(i),x)\,.
\end{equation}
Then we have
\begin{eqnarray}
\div Q(\eta)&=&\sum_{i=1}^N\gamma^i(\eta)\sum_{y\in V}\left[Q^i(\eta(i),y)-Q^i(y,\eta(i))\right] \nonumber\\
&=&\sum_{i=1}^N\gamma^i(\eta)\left[\mu_1^i(\eta(i))-\mu_2^i(\eta(i))\right]\nonumber \\
&=&\prod_{i=1}^N\mu_1^i(\eta(i))-\prod_{i=1}^N\mu_2^i(\eta(i))\,
\label{ondra}
\end{eqnarray}
where the last equality follows from the special form \eqref{alfafin} since the sum in the second line of \eqref{ondra} is telescopic and only the initial and final terms survive.

\subsection{The k-generation poset}

We call k-generation poset a poset $(V, \leq )$ defined as follows. For simplicity we consider only the case $V$ finite.
The set $V$ is partitioned into $k$ disjoints subsets $V_1, \dots , V_k$. The elements of the subset $V_j$ are the elements of the generation $j$. If $x\in V_i$ and $y\in V_j$ then we have $x\leq y$ if $i<j$ while $x$ and $y$ are not comparable when $i=j$. An example of the corresponding Hasse diagram is drawn in Figure 4. Elements of the same generation are drawn in the same horizontal line. We now discuss conditions to have $\mu_1\preceq \mu_2$ on a k-generation poset.

\begin{figure}\label{el-hasse1}
\setlength{\unitlength}{1.5cm}
\begin{picture}(4,2.5)

\put(1,0){\circle*{0.1}}
\put(1.5,0){\circle*{0.1}}
\put(2,0){\circle*{0.1}}
\put(2.5,0){\circle*{0.1}}
\put(1,0.5){\circle*{0.1}}
\put(1.5,0.5){\circle*{0.1}}
\put(1,1){\circle*{0.1}}
\put(1.5,1){\circle*{0.1}}
\put(2,1){\circle*{0.1}}
\put(1,1.5){\circle*{0.1}}
\put(1.5,1.5){\circle*{0.1}}

\put(1,1){\vector(0,1){0.5}}
\put(1,1){\vector(1,1){0.5}}

\put(1.5,1){\vector(0,1){0.5}}
\put(1.5,1){\vector(-1,1){0.5}}

\put(2,1){\vector(-1,1){0.5}}
\put(2,1){\vector(-2,1){1}}

\put(1,0.5){\vector(0,1){0.5}}
\put(1,0.5){\vector(1,1){0.5}}
\put(1,0.5){\vector(2,1){1}}

\put(1.5,0.5){\vector(0,1){0.5}}
\put(1.5,0.5){\vector(1,1){0.5}}
\put(1.5,0.5){\vector(-1,1){0.5}}

\put(1,0){\vector(1,1){0.5}}
\put(1,0){\vector(0,1){0.5}}
\put(1.5,0){\vector(0,1){0.5}}
\put(1.5,0){\vector(-1,1){0.5}}
\put(2,0){\vector(-1,1){0.5}}
\put(2,0){\vector(-2,1){1}}
\put(2.5,0){\vector(-2,1){1}}
\put(2.5,0){\vector(-3,1){1.5}}
\end{picture}
\caption{The Hasse diagram of a 4-generation poset.}
\end{figure}
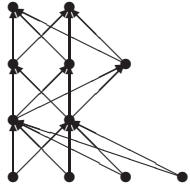

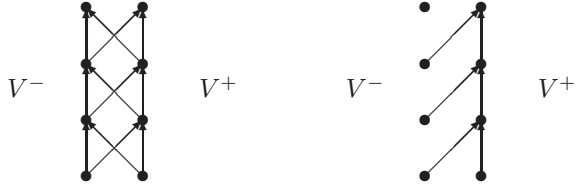
\begin{figure}\label{el-hasse2}
\setlength{\unitlength}{1.5cm}
\begin{picture}(4,2.5)

\put(-0.7,0.7){$V^-$}
\put(1.0,0.7){$V^+$}

\put(0,0){\circle*{0.1}}
\put(0.5,0){\circle*{0.1}}
\put(0,0.5){\circle*{0.1}}
\put(0.5,0.5){\circle*{0.1}}
\put(0,1){\circle*{0.1}}
\put(0.5,1){\circle*{0.1}}
\put(0,1.5){\circle*{0.1}}
\put(0.5,1.5){\circle*{0.1}}

\put(0,1){\vector(0,1){0.5}}
\put(0,1){\vector(1,1){0.5}}

\put(0.5,1){\vector(0,1){0.5}}
\put(0.5,1){\vector(-1,1){0.5}}

\put(0,0.5){\vector(0,1){0.5}}
\put(0,0.5){\vector(1,1){0.5}}

\put(0.5,0.5){\vector(0,1){0.5}}
\put(0.5,0.5){\vector(-1,1){0.5}}

\put(0,0){\vector(1,1){0.5}}
\put(0,0){\vector(0,1){0.5}}
\put(0.5,0){\vector(0,1){0.5}}
\put(0.5,0){\vector(-1,1){0.5}}

\put(3,0){\circle*{0.1}}
\put(3.5,0){\circle*{0.1}}
\put(3,0.5){\circle*{0.1}}
\put(3.5,0.5){\circle*{0.1}}
\put(3,1){\circle*{0.1}}
\put(3.5,1){\circle*{0.1}}
\put(3,1.5){\circle*{0.1}}
\put(3.5,1.5){\circle*{0.1}}
\put(2.3,0.7){$V^-$}
\put(4,0.7){$V^+$}

\put(3,1){\vector(1,1){0.5}}

\put(3.5,1){\vector(0,1){0.5}}

\put(3,0.5){\vector(1,1){0.5}}

\put(3.5,0.5){\vector(0,1){0.5}}

\put(3,0){\vector(1,1){0.5}}
\put(3.5,0){\vector(0,1){0.5}}

\end{picture}
\caption{Two reduced versions of the Hasse diagram of the 4-generation poset in Figure 4. Left columns correspond to vertices of the type $V^-$ while right columns to vertices of the type $V^+$. }
\end{figure}
Let $V^-_i:=\left\{x\in V_i\,:\,\mu_1(x)-\mu_2(x)\geq 0\right\}$ and $V^+_i:=\left\{x\in V_i\,:\,\mu_1(x)-\mu_2(x)<0 \right\}$. Let us also introduce a reduced k-generation poset having on each generation exactly 2 vertices (Figure 5 on the left). We label also the two vertices of the generation $i$ of this reduced poset with the same symbols $V^\pm_i$. Let us call
$$
\left\{
\begin{array}{l}
\tilde\mu_1(V_i^+)-\tilde\mu_2(V_i^+):=\sum_{x\in V^+_i}\left[\mu_1(x)-\mu_2(x)\right]\,,\\
\tilde\mu_1(V_i^-)-\tilde\mu_2(V_i^-):=\sum_{x\in V^-_i}\left[\mu_1(x)-\mu_2(x)\right]\,.
\end{array}
\right.
$$
First of all we claim that the existence of a flow on the original poset having divergence $\mu_1-\mu_2$ is equivalent to the existence of a flow having divergence $\tilde\mu_1-\tilde\mu_2$ on the reduced poset. The key idea to prove this fact is the decomposition \eqref{finite-decomp2} for which we recall that all the paths $\gamma_n$ exit from elements of $V^-$
and end on elements of $V^+$. It is possible to construct a flow on the original poset splitting the paths of the decomposition of the flow on the reduced poset and conversely it is possible to construct a flow on the reduced poset gluing together paths of the decomposition of the flow on the original poset. The details can be easily fixed and we will not discuss them.

Second we claim that the existence of a flow having divergence $\tilde\mu_1-\tilde\mu_2$ on the reduced poset (figure 5 left) is equivalent to the existence of a flow having the same divergence on the further reduced poset on the right of Figure 5. The idea behind the proof of this statement is the same of the previous one.
The only non trivial fact is that if there exists a flow for the poset on the left hand side of figure 5 then there exists also a flow for the poset on the right hand side.
Recall that on both sides of Figure 5 the vertices on the left columns are characterized by the condition $\tilde\mu_1-\tilde\mu_2\geq 0$. Consider a path $\gamma$ in the decomposition of the flow for the poset on the left. This path will start from a vertex on the left column and will finish on a vertex on the right column. We modify this path keeping fixed the starting and final vertices. In particular the modified path will jump immediately on the right column and will continue moving on that side. This new path is a path also for the further reduced poset on the right hand side of Figure 5. The flow obtained as a superposition of all the paths modified this way will have the same divergence of the original one and moreover will be a flow for the reduced poset.

Since the further reduced poset on the right hand side of Figure 5 is a tree we can apply the results of section  \ref{fait}. We claim that $\mu_1\preceq\mu_2$ on a k-generation poset if and only if
\begin{equation}
\sum_{i=1}^{j-1}\sum_{x\in V_i}\left(\mu_1(x)-\mu_2(x)\right)-\sum_{x\in V_j}\left[\mu_1(x)-\mu_2(x)\right]_-\geq 0\,, \qquad j=1,\dots ,k\,. \label{k-gen}
\end{equation}
In the above formula $[\cdot]_-$ denotes the negative part and for $j=1$ the sum over an empty set of indices has to be interpreted as zero. Indeed recalling that the left column is characterized by $\tilde\mu_1-\tilde\mu_2\geq 0$ it is easy to see that condition \eqref{non} is automatically satisfied on all the edges going from one column to the other. The remaining conditions can be compactly written like \eqref{k-gen}.

\subsection{Lattices}

For an integer $N\geq 2$, consider the lattice $V:=\{0,1\}^N$ with the usual partial order. An element $\eta\in V$
has the form $\eta=\left(\eta(1),\dots ,\eta(N)\right)$ with
$\eta(i)\in \left\{0,1\right\}$. The Hasse diagram associated is $(V,\underline E)$ where
$(\eta,\eta')\in \underline E$ if and only if $\eta'$ is obtained by $\eta$ changing
one coordinate of $\eta$ from $0$ to $1$. The digraph on the right of Figure 2
is the Hasse diagram for this poset when $N=2$.

A classic sufficient condition to have stochastic monotonicity is the Holley condition \cite{Ho}. An alternative sufficient condition to have $\mu_1\preceq \mu_2$ is discussed in \cite{Hos}. This condition is very simple and natural and coincides with the fact that $\mu_1-\mu_2$ is not increasing. Differently from the Holley condition depends just on the difference between the two measures and can be easily proved using flows.
The proof in \cite{Hos} is also elementary but it assumes Holley result while our proof is completely independent. The result is strictly related to the geometry of the poset. It is possible indeed to construct posets (with Hasse diagram being a tree for example) for which $\mu_1-\mu_2$ is not increasing but nevertheless $\mu_1\not\preceq \mu_2$.

Our proof is by induction. For $N=2$ a necessary and sufficient condition to have $\mu_1\preceq\mu_2$ is
\eqref{el-simple}. It is easy by a direct inspection to check that if $\mu_1-\mu_2$ is non increasing then \eqref{el-simple} holds.
Let us now assume the validity of the result for $N$.
Any $\eta\in \left\{0,1\right\}^{N+1}$ is of the form $(\tilde\eta,0)$ or $(\tilde\eta, 1)$ with $\tilde\eta\in\left\{0,1\right\}^{N}$. We define
\begin{equation}\label{voltura}
\left(\tilde\mu_1-\tilde\mu_2\right)(\tilde\eta)
:=\frac{\left[\mu_1(\tilde\eta,1)-\mu_2(\tilde\eta,1)\right]+\left[\mu_1(\tilde\eta,0)-\mu_2(\tilde\eta,0)\right]}{2}\,.
\end{equation}
Since $\mu_1-\mu_2$ is non increasing on $\{0,1\}^{N+1}$ then also $\tilde\mu_1-\tilde\mu_2$ is non increasing on $\{0,1\}^N$. By induction, there exists a flow $\tilde Q$ on the Hasse diagram of the poset of order $N$ such that $\div\tilde Q=\tilde\mu_1-\tilde\mu_2$. Now, consider the flow $Q$ on the Hasse diagram of the poset of order $N+1$ defined as follows: for any   directed edge $(\tilde \eta,\tilde \xi)$ of the Hasse diagram of order $N$ we pose
$$
Q((\tilde \eta,1),(\tilde\xi,1))=Q((\tilde \eta,0),(\tilde\xi,0))=\tilde Q\left(\tilde \eta,\tilde\xi \right)\,,
$$
while, for all the remaining edges we fix $Q(\eta,\eta')=0$. We have
\begin{equation}\label{conf}
\div Q(\tilde\eta, 1)=\div Q(\tilde\eta,0)=\div \tilde Q\left(\tilde\eta\right)=\left(\tilde\mu_1-\tilde\mu_2\right)(\tilde\eta)\,.
\end{equation}
We now define another flow $Q^a$ on the Hasse diagram of order $N+1$ as follows: for any $\tilde\eta\in\{0,1\}^N$ we pose
\begin{equation}\label{carusi}
Q^a((\tilde\eta,0),(\tilde\eta,1)):
=\frac{\left[\mu_1(\tilde\eta,0)-\mu_2(\tilde\eta,0)\right]-\left[\mu_1(\tilde\eta,1)-\mu_2(\tilde\eta,1)\right]}{2}\,.
\end{equation}
We fix then $Q^a(\eta,\eta')=0$ for all the remaining edges. This is a well defined flow since the right hand side of \eqref{carusi} is non-negative being $\mu_1-\mu_2$ not increasing. Using \eqref{conf} and \eqref{carusi}  we obtain that the flow $Q+Q^a$ satisfies $\div (Q+Q^a)=\mu_1-\mu_2$ and the proof is complete.


\end{document}